\newcommand{\rr}{\mathcal{R}}
\newcommand{\A}{\mathcal{A}}
\newcommand{\tcm}{\textcolor{black}}
\newcommand{\tcb}{\textcolor{black}}
\newcommand{\comment}[1]{}
\newtheorem{theorem}{\textbf{Theorem}}
\newtheorem{lemma}{\textbf{Lemma}}
\newtheorem{proposition}{\textbf{Proposition}}
\newtheorem{definition}{\textbf{Definition}}
\newcommand{\R}{\mathbb{R}}
\begin{document}



\title{On the use of Traveling Waves for Pest/Vector elimination using the Sterile Insect Technique}
\author{R. Anguelov$^{1}$,  Y. Dumont$^{1,2,3}$ \footnote{corresponding author: yves.dumont@cirad.fr}, and I.V. Yatat Djeumen$^4$ \\
{\small $^1$ University of Pretoria, Department of Mathematics and Applied Mathematics, South Africa }\\
\small  $^2$ CIRAD, UMR AMAP, P\^ole de Protection des Plantes, F-97410 Saint-Pierre, La R\'eunion, France \\
\small  $^3$ AMAP, Univ Montpellier, CIRAD, CNRS, INRAE, IRD, Montpellier, France \\
\small  $^4$ University of Yaound\'e I, National Advanced School of Engineering  of Yaound\'e, Cameroon}

\maketitle

\begin{abstract}
The development of sustainable vector/pest control methods is of utmost importance to reduce the risk of vector-borne diseases and pest damages on crops. Among them, the Sterile Insect Technique (SIT) is a very promising one. In this paper, using diffusion operators, we extend a temporal SIT model, developed in a recent paper, into a partially degenerate reaction-diffusion SIT model. Adapting some theoretical results on traveling wave solutions for partially degenerate reaction-diffusion equations, we show the existence of mono-stable and bi-stable traveling-wave solutions for our SIT system. The dynamics of our system is driven by a SIT-threshold number above which the SIT control becomes effective and \tcm{drives} the system to elimination, using massive releases. When the amount of sterile males is lower than the \tcm{SIT-threshold}, the SIT model experiences a strong Allee effect such that a bi-stable traveling wave \tcm{solution} can exist and can also be used to derive an effective long term strategy, mixing massive and small releases. We illustrate some of our theoretical results with numerical simulations, and, also explore numerically spatial-localized SIT control strategies, using massive and small releases. We show that this "corridor" strategy can be efficient to block an invasion and eventually can be used to push back the front of a vector/pest invasion.
\end{abstract}

\vspace{1cm}

{ \bf keywords}: Sterile insect technique; Vector control; Pest control; Partially degenerate reaction-diffusion system; Allee effect; Traveling wave; Corridor strategy

\section{Introduction}
Food security and Health security have become of utmost importance around the world because pests and diseases vectors can travel, invade, and settle in new areas causing crop losses and diseases epidemics or pandemics. For instance, according to WHO, 3.9 billons of people are at risk of infection with dengue viruses \cite{Back2013} and modeling estimate indicates that between 284–528 millions of people are infected per year. Dengue is considered as a tropical disease, but as its vectors, like \textit{Aedes albopictus} \cite{Gratz2004}, are now established in the South of Europe (continuing to spread northward), and also in North-America, the risk of epidemics is real.  Similarly, fruit flies, once established, may cause $25$ to $50$ percent losses in food-crop harvests on a very wide range of crops. Among them, the oriental fruit fly, \textit{B. dorsalis}, is the most damaging one (see \cite{Vargas2015} for an overview) . Native from Asia, it invaded Africa from 2004, and La R\'eunion island in 2017. Few individuals have been recorded in Italy in May 2018. Again, being highly invasive, the risk for southern Europa is high.

That is why pest/vector control is absolutely necessary. In the past, most of the control strategies relied on chemical control. Now, we know that \tcm{this is} not sustainable, as chemicals have \tcm{negative} environmental effects, with also the risk of vector/pest resistance appearance in addition to toxic effects on human health, such that most of the chemical cannot be used and are not efficient anymore.
\tcm{Therefore,} environmental-friendly vector control and pest management strategies have received widespread attention and have become challenging issues in order to reduce or prevent devastating impact on health, economy, food security, and biodiversity.

One sustainable, environmental-friendly and promising alternative for vector/pest control is the Sterile Insect Technique (SIT). This is an old control technique, proposed in the 30s and 40s by three key researchers in the USSR, Tanzania and the USA and, first, applied in the field in the 50's \cite{SIT}. SIT consists to sterilize male pupae using ionizing irradiation and to release a large number of these irradiated males such that they will mate wild females, that will have no viable offspring. Hence, it will result in a progressive decay of the targeted population \cite{Anguelov2019, SIT, Knipling1955, Seirin2013}. For
mosquitoes, other sterilization techniques have been developed using either genetics (the Release of Insects carrying a Dominant Lethal, in short
RIDL) (see \cite{Seirin2013,Seirin2013b} and reference therein), or Cytoplasmic Incompatibility using a bacteria (Wolbachia) \cite{Sinkins2004,Strugarek2019}. For fruit flies, ionizing radiation has been used so far \cite{SIT}. However a genetically engineered Medfly (\textit{C. capitata}) has been developed and tested \cite{MedflyRIDL}. Even if, conceptually, the Sterile Insect Technique appears to be very simple, in reality it is not and the process to reach field applications is long and complex \cite{SIT}. That is why, even if SIT is now used routinely in some places around the world (in Spain or Mexico against the mediterranean fruit flies, for instance), they are still many SIT feasibility projects around the World, including three in France, against \textit{Aedes albopictus}, vector of Dengue, Chikungunya and Zika in La R\'eunion, against the fruit fly \textit{Ceratitis capitata} in Corsica, and the fruit fly \textit{Bactrocera dorsalis} in La R\'eunion \cite{Aronna2020}.

We firmly believe that mathematical modeling and computer simulations can be additional and efficient tools within these ongoing programs in order to prevent SIT failures, improve field protocols, and test assumptions before any field investigations, etc.

This work is an extension of \cite{Anguelov2019} where only  a mean-field temporal SIT mathematical model was developed and studied aiming to assess the SIT potential as a long term control tool for vector/pest population reduction or elimination, combining massive and small releases. In the present contribution, we take into account vector/pest adult's dispersal, keeping a certain genericity which allows to apply our spatio-temporal model for several vectors or pests, like mosquitoes or fruit flies. 



Very few works exist on SIT taking into account explicitly the spatial component. First because, from the ecological point of view, knowledge are scarce and incomplete. This last point might be strange, but in general it is much more difficult to study the behavior of pest or vector in the field, and, \tcb{very often}, many studies only rely on laboratory or semi-field studies. 
Also, from the modeling point of view, spatio-temporal models are more difficult to develop and they require more sophisticated tools to be studied. However, some attempts have been made in order to have some insights in SIT systems.
In \cite{Manoranjan1986}, the authors were the first to consider a reaction-diffusion equation to take into account the spreading of a pest in a SIT model. This work was completed in \cite{Lewis1993}, where the release of sterile females was also considered. Tyson et al. \cite{Tyson2007} used an advection-reaction-diffusion model to study SIT against codling moth in pome fruit orchards. In \cite{Marsula1994}, the authors consider diffusion like in \cite{Manoranjan1986}, with a time discrete SIT model, to study a barrier strategy. Similarly, Serin Lee et al. \cite{Seirin2013, Seirin2013b} studied SIT control with barrier effect using a system of two reaction-diffusion equations for the wild and the sterile populations. In \cite{Ferreira2008}, the authors consider discrete cellular automata and show that SIT can fail when oviposition containers distribution is too heterogeneous. A recent work \cite{Bishop2014} includes impulsive SIT releases.
A more complex 2D spatial mosquito model, using a system of coupled ordinary differential equations and advection-reaction-diffusion equations, with applications on SIT, was studied by Dufourd and Dumont \cite{Dufourd2013} highlighting the importance of environmental parameters, like wind, in SIT release strategies. However, no theoretical results were obtained and the results mainly rely on numerical simulations. 

The rest of the paper is organized as follows. Section \ref{preliminay} is devoted to recall preliminaries, including theoretical results obtained in \cite{Anguelov2019}, that are helpful for our current study. Section \ref{spatio-temporal-SIT-model} deals with the formulation and the study of the spatio-temporal SIT model. We also extend results previously obtained by Fang and Zhao \cite{Fang2009} to show existence of monostable and bistable traveling wave solutions for partially degenerate reaction-diffusion equations.  In both cases, the wave solution involves the elimination equilibrium or the zero equilibrium and a positive equilibrium. Section \ref{Numerical-simulations} deals with numerical simulations in order to support the  theoretical results and also go further. In particular, we consider a strategy developed and studied in \cite{Anguelov2019} where massive and small releases were considered to drive a wild population to elimination. Here we extend this strategy using spatially-localized massive releases, within a given (spatial) corridor, coupled with small releases in the pest/vector free area.
Finally, in section \ref{Conclusion}, we summarize the main results and provide future ways to improve or extend this work.

\section{Preliminaries}\label{preliminay}
Let us first recall some notations that will be used in this work. Let $\mathcal{C}$ be the set of all bounded and
continuous functions
 from $\mathbb{R}$ to $\mathbb{R}^n$. For $u=(u_1,...,u_n)'$,
 $v=(v_1,...,v_n)'\in\mathcal{C}$, we define $u\geq v$ (resp. $u\gg
 v$) to mean that $u_i(x)\geq v_i(x)$ (resp. $u_i(x)>v_i(x)$), $1\leq i\leq
 n$, $\forall x\in\mathbb{R}$, and $u>v$ to mean that $u\geq v$ but
 $u\neq v$. Any vector of $\mathbb{R}^n$ can be identified as an element in $\mathcal{C}$.  For any $r\in\mathbb{R}$, we use boldface
\textbf{r} to denote the vector with each component being $r$, i.e.,
\textbf{r} = $(r,..., r)'$. Moreover, for a square matrix $A$, its
stability modulus is defined by
$$s(A):=\max\{Re\lambda:\det(\lambda I-A)=0\}.$$

\begin{definition}(Irreducible matrix, \cite[page 56]{Smith1995})\label{definition-Smith}\\ An $n\times n$ matrix $A =
(a_{ij})_{1\leq i,j\leq n}$ is irreducible if for every nonempty,
proper subset $I$ of the set $N = \{1,2,... , n\}$, there is an
$i\in I$ and $j \in J = N\setminus I$ such that $a_{ij}\neq0$.
\end{definition}

Let $M_T(t)$ be the number of sterile insects at time $t$, $1/\mu_T$ the average lifespan of sterile insects and $\Lambda$ the number of sterile insects released per
unit of time. The dynamics of $M_T$ is described by 
$$ \displaystyle\frac{dM_T}{d t} = \Lambda-\mu_TM_T.$$
Assuming $t$ large enough, we may assume that $M_T(t)$
is at its equilibrium value $M_T=\Lambda/\mu_T$.  Following \cite{Anguelov2019},
 the minimalistic SIT model is defined as follows

\begin{equation}\label{SIT-ode}
 \left\{%
\begin{array}{lcl}
 \displaystyle\frac{dA}{d t} &=& \phi F-(\gamma+\mu_{A,1}+\mu_{A,2}A)A,\\
     \displaystyle\frac{dM}{d t} &=& (1-r)\gamma A-\mu_MM,\\
    \displaystyle\frac{dF}{d t} &=& \displaystyle\frac{ M}{M+M_T}r\gamma A-\mu_FF,\\
\end{array}
\right.
\end{equation}
where parameters and state variables are described in Table
\ref{table-description}, page \pageref{table-description}. Note also that system (\ref{SIT-ode}) is monotone cooperative \cite{Smith1995}.

\begin{table}[H]
	\centering
	\begin{tabular}{|l|l|}
		\hline
		Symbol & Description \\
		\hline
		$A$ & Immature stage (gathering eggs, larvae, nymph or pupae stages) \\
		\hline
		$F$ & Fertilized and eggs-laying females \\
		\hline
		$M$ & Males \\
		\hline\hline
		$\phi$ & Number of eggs at each deposit per capita (per day) \\
		\hline
		$\gamma$ & Maturation rate from larvae to adult (per day) \\
		\hline
		$\mu_{A,1}$ & Density independent mortality rate of the aquatic stage (per day) \\
		\hline
		$\mu_{A,2}$ & Density dependent mortality rate of the aquatic stage (per day$\times$ number) \\
		\hline
		$r$ & Sex ratio \\
		\hline
		$1/\mu_F$ & Average lifespan of female (in days) \\
		\hline
		$1/\mu_M$ & Average lifespan of male (in days) \\
		\hline
	\end{tabular}
	\caption{Description of state variables and parameters of model (\ref{SIT-ode})}\label{table-description}
\end{table}

\noindent The basic offspring number related to system (\ref{SIT-ode}) is defined as follows
$$\rr=\displaystyle\frac{r\gamma\phi}{\mu_F(\gamma+\mu_{A,1})}.$$
When $\rr >1$, we set
\begin{equation}\label{MT1}
  Q=
  \displaystyle\frac{\mu_{A,2}\mu_M}{(\gamma+\mu_{A,1})(1-r)\gamma}  \quad \mbox{and}\quad  M_{T_{1}} = \displaystyle\frac{(\sqrt{\rr}-1)^2}{Q},
\end{equation}
where $M_{T_1}$ is the SIT-threshold above which the wild population is driven to elimination.
In \cite{Anguelov2019}, when $M_T=0$, we proved that model (\ref{SIT-ode}) admits only the elimination equilibrium, $\bf{0}$ when $\rr\leq1$, and a unique positive equilibrium $E^*=(A^*,M^*,F^*)'$, the wild equilibrium, in addition to the elimination equilibrium, whenever $\rr>1$. Then, assuming $\rr>1$, we showed that when $M_T\in(0,M_{T_1})$ then model (\ref{SIT-ode}) has
two positive equilibria $E_{1,2}=(A_{1,2},M_{1,2},F_{1,2})'$
with $E_1<E_2$, namely
\begin{equation}\label{Equi_1and2}
\begin{array}{ccl}
  A_{1,2} & = & \displaystyle\frac{\mu_M}{(1-r)\gamma}M_{1,2}, \\
  F_{1,2} & = & \displaystyle\frac{(\gamma+\mu_{A,1}+\mu_{A,2}A_{1,2})A_{1,2}}{\phi}, \\
  M_{1} & = & \displaystyle\frac{M_T^*}{\alpha_{+}},\\
  M_{2} & = & \displaystyle\frac{M_T^*}{\alpha_{-}}.
\end{array}
\end{equation}
with 
\begin{equation}
\begin{array}{l}
    \Delta (M_T^*)= ((\sqrt{\rr}-1)^2-M_T^*Q)((\sqrt{\rr}+1)^2-M_T^*Q), \\
    \alpha_{\pm}=\displaystyle\frac{(\rr-1-QM_T^*)\pm\sqrt{\Delta(M^*_T)}}{2}.
    \end{array}
\end{equation} 
When $M_T=M_{T_1}$ then the two positive equilibria $E_{1,2}$ collide into a single equilibrium $E_\dagger$. The asymptotic behavior of model \eqref{SIT-ode} is summarized in the next theorem.

\begin{theorem}\cite{Anguelov2019}\\\label{SIT-ode-theorem}
Assume $\rr>1$. System (\ref{SIT-ode}) defines a dynamical system on $D=\R^3_+$ for
any $M_T\in[0,+\infty)$. Moreover,
\begin{enumerate}
    \item When $M_T>M_{T_1}$ then equilibrium \textbf{\emph{0}} is
    globally asymptotically stable for system (\ref{SIT-ode}).
    \item When $M_T=M_{T_1}$ then system (\ref{SIT-ode}) has two
    equilibria \textbf{\emph{0}} and $E_\dag$ with $\textbf{\emph{0}}<E_\dag$. The set $\{x\in \mathbb{R}^3:\textbf{\emph{0}}\leq
    x< E_\dag\}$ is in the basin of attraction of \textbf{\emph{0}}
    while the set $\{x\in \mathbb{R}^3: x\geq E_\dag\}$ is in the basin of attraction
    of $E_\dag$.
    \item When $0<M_T<M_{T_1}$ then system (\ref{SIT-ode}) has three
    equilibria \textbf{\emph{0}}, $E_1$ and $E_2$ with $\textbf{0}<E_1<E_2$. The set $\{x\in \mathbb{R}^3:\textbf{\emph{0}}\leq
    x< E_1\}$ is in the basin of attraction of \textbf{\emph{0}} while the set $\{x\in \mathbb{R}^3: x> E_1\}$ is in the basin of attraction
    of $E_2$.
\end{enumerate}
\end{theorem}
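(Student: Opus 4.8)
The plan is to read (\ref{SIT-ode}), for a fixed $M_T>0$, as a strongly monotone dissipative dynamical system, determine all equilibria and the sign of their stability modulus, and then force the fate of an arbitrary orbit by squeezing it between sub- and super-equilibria whose limits are dictated by the equilibrium count. Write $f=(f_1,f_2,f_3)'$ for the right-hand side of (\ref{SIT-ode}) (for $M_T=0$, $M/(M+M_T)$ is read as $1$, the classical sub-case, treated identically). First I would check well-posedness: on the faces $\{A=0\}$, $\{M=0\}$, $\{F=0\}$ of $D=\R^3_+$ the corresponding component of $f$ is nonnegative, so $D$ is positively invariant; and, taking $L=r\gamma/\mu_F$, $K=(1-r)\gamma/\mu_M$, the vector $\overline v=(V,KV,LV)'$ satisfies $f(\overline v)\le\mathbf{0}$ once $V$ is large (the quadratic mortality term $-\mu_{A,2}A^2$ dominates the first component), so $[\mathbf{0},\overline v]$ is absorbing, no orbit blows up, and $\phi_t$ is a semiflow on $D$. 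The off-diagonal entries of $Df$ are nonnegative on $D$ (this is the cooperativity noted after (\ref{SIT-ode})) and $Df$ is irreducible on $\mathrm{int}\,D$ (the dependency digraph $F\to A\to M\to F$ is strongly connected), so $\phi_t$ is strongly monotone on $\mathrm{int}\,D$; moreover every orbit except $\mathbf{0}$ and those on the invariant $M$-axis (which decay to $\mathbf{0}$) enters $\mathrm{int}\,D$ in finite time. Two facts drive everything below: a bounded orbit that is eventually monotone converges to an equilibrium; and, since $Df(\mathbf{0})$ is triangular with diagonal $\big(-(\gamma+\mu_{A,1}),-\mu_M,-\mu_F\big)$, the equilibrium $\mathbf{0}$ is locally asymptotically stable for every $M_T>0$.

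Next, the equilibria. Using the relations $A=\frac{\mu_M}{(1-r)\gamma}M$ and $F=\frac{(\gamma+\mu_{A,1}+\mu_{A,2}A)A}{\phi}$ in the $F$-equation reduces the positive equilibria to the roots $M_{1,2}$ of a quadratic whose discriminant is $\Delta(M_T)$ displayed above; this gives exactly the announced list — only $\mathbf{0}$ when $M_T>M_{T_1}$, the pair $\mathbf{0}<E_\dag$ when $M_T=M_{T_1}$, the triple $\mathbf{0}<E_1<E_2$ when $0<M_T<M_{T_1}$ — so the equilibrium parts of (1)--(3) come for free. For linear stability at a positive equilibrium $E=(A,M,F)'$, the equilibrium identities simplify the constant term of the characteristic polynomial of $Df(E)$ to $\det\big(-Df(E)\big)=A\big(\mu_M\mu_F\mu_{A,2}-\phi\,r(1-r)\gamma^2\,\tfrac{M_T}{(M+M_T)^2}\big)$. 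Since $M\mapsto \tfrac{M_T}{(M+M_T)^2}$ is strictly decreasing and the double equilibrium $E_\dag$ of a saddle-node carries a zero eigenvalue, this quantity is negative at $E_1$, zero at $E_\dag$, positive at $E_2$. By Perron--Frobenius for the cooperative irreducible matrix $Df(E_1)$, $s\big(Df(E_1)\big)$ is a simple real eigenvalue with a strongly positive eigenvector $v\gg\mathbf{0}$, and $\det(-Df(E_1))<0$ forces $s\big(Df(E_1)\big)>0$; likewise $s\big(Df(E_\dag)\big)=0$ with eigenvector $v\gg\mathbf{0}$. Note that no Routh--Hurwitz analysis of $E_2$ is needed, since the squeezing below yields convergence to $E_2$ directly.

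Now the global statements, all by the same squeeze. Part (1): for any $x_0\in D$ take the explicit super-equilibrium $\overline v\ge x_0$; then $t\mapsto\phi_t(\overline v)$ is nonincreasing, bounded below by $\mathbf{0}$, hence converges to an equilibrium, necessarily $\mathbf{0}$, and $\mathbf{0}\le\phi_t(x_0)\le\phi_t(\overline v)$ gives $\phi_t(x_0)\to\mathbf{0}$. Part (3): for $x_0<E_1$, after a short time strong monotonicity lets us assume $\mathbf{0}\ll x_0\ll E_1$ (the $M$-axis case being trivial); since $v\gg\mathbf{0}$ and $s(Df(E_1))>0$, the point $E_1-\varepsilon v$ is a strict super-equilibrium for all small $\varepsilon>0$ and, for $\varepsilon$ small enough, also $x_0\le E_1-\varepsilon v\ll E_1$, so $\phi_t(E_1-\varepsilon v)$ decreases to an equilibrium strictly below $E_1$, i.e. to $\mathbf{0}$, and squeezing gives $\phi_t(x_0)\to\mathbf{0}$. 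For $x_0>E_1$, assume $x_0\gg E_1$; now $E_1+\varepsilon v$ is a strict sub-equilibrium with $E_1+\varepsilon v\le x_0$, so $\phi_t(E_1+\varepsilon v)$ increases to an equilibrium strictly above $E_1$, i.e. to $E_2$; the explicit super-equilibrium $\overline v\ge x_0$ has a decreasing orbit converging to an equilibrium that is $\ge E_2$ (because $\phi_t(\overline v)\ge\phi_t(E_1+\varepsilon v)$), i.e. again $E_2$; squeezing gives $\phi_t(x_0)\to E_2$. Part (2) is the same: the half $\{x\ge E_\dag\}$ is the upper-half argument with $E_2$ replaced by $E_\dag$ ($E_\dag$ itself serving as lower barrier, so no sub-equilibrium is needed); for the half $\{x<E_\dag\}$ one again needs a strict super-equilibrium lying strictly below $E_\dag$, which I would obtain from the one-dimensional center-manifold reduction at the fold: on $W^c$ the reduced equation is $\dot\sigma=b\sigma^2+o(\sigma^2)$ with $b<0$ — the sign being forced by the instability of $E_1$ for $M_T<M_{T_1}$ — so the point of $W^c$ with $\sigma=-\varepsilon$, which equals $E_\dag-\varepsilon v+O(\varepsilon^2)\ll E_\dag$, satisfies $f\ll\mathbf{0}$; then squeeze as before.

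The routine but delicate points are the algebraic verification of the sign pattern of $\det(-Df(E_i))$ together with the small Perron--Frobenius bookkeeping that turns a sign of the determinant into a sign of $s(Df)$, and — the real obstacle — the degenerate case $M_T=M_{T_1}$, where the linearization at $E_\dag$ is not hyperbolic, so the super-equilibrium just below $E_\dag$ cannot be produced by the linear trick and must come from the fold/center-manifold analysis sketched above (alternatively, from a monotone limiting argument letting $M_T\uparrow M_{T_1}$ in Part (3)). Everything else is bookkeeping with the monotone-systems machinery of \cite{Smith1995}.
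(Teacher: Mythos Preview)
The paper does not prove this theorem: it is quoted from \cite{Anguelov2019} (note the citation attached to the theorem header) and serves here only as background for the PDE analysis. So there is no in-paper argument to compare your proposal against.

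Your proposal is the natural monotone-systems proof and is correct in structure; it is very likely what \cite{Anguelov2019} does, since the present paper explicitly records the cooperativity of (\ref{SIT-ode}) right after stating the system and invokes \cite{Smith1995}. The equilibrium count, the determinant formula $\det(-Df(E))=A\big(\mu_M\mu_F\mu_{A,2}-\phi r(1-r)\gamma^2 M_T/(M+M_T)^2\big)$, and the Perron--Frobenius step turning its sign into the sign of $s(Df(E_1))$ all check out. Two minor remarks. First, $Df(\mathbf{0})$ is not literally triangular --- it carries $\phi$ in position $(1,3)$ and $(1-r)\gamma$ in position $(2,1)$ --- but your conclusion on the spectrum is right, since the third row is $(0,0,-\mu_F)$ and the residual $2\times2$ block is lower triangular. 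Second, you correctly flag the genuinely delicate spot: at $M_T=M_{T_1}$ the linear trick for producing a strict super-equilibrium just below $E_\dag$ fails, and one must appeal to the fold structure (or a limit $M_T\uparrow M_{T_1}$); this is the one place where the argument is more than bookkeeping.
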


Based on the previous theorem, it is clear that SIT always needs to be maintained. In \cite{Anguelov2019}, using the strong Allee effect induced by SIT, we have developed a long term sustainable strategy using, first, massive releases, and then, small releases. Indeed, using massive releases, i.e. $M_T > M_{T_1}$, we drive the population into the set $\{x\in \mathbb{R}^3:\textbf{0}\leq
    x< E_1 (M_T)\}$ for a given (small) value for $M_T<<M_{T_1}$, in a finite time (see Theorem 4 in \cite{Anguelov2019}). Then, the control goes on with only small releases (see Theorem 5 in \cite{Anguelov2019}). This strategy allows to use a limited number of sterile insects and also to treat large area, step by step. 

Now, we take into account the spatial dynamics of the insects. The main objective is to show how a bi-stable traveling wave, generated by the releases of sterile males, can be helpful to control a wild mosquito/pest invasion.

\section{A spatio-temporal SIT model}\label{spatio-temporal-SIT-model}
Taking into account adult vectors or pests dispersal through Laplace operators, model (\ref{SIT-ode}) becomes the following partially degenerate reaction-diffusion system:

\begin{equation}\label{SIT-pde}
 \left\{%
\begin{array}{lcl}
 \displaystyle\frac{\partial A}{\partial t} &=& \phi F-(\gamma+\mu_{A,1}+\mu_{A,2}A)A, \quad (t,x)\in\mathbb{R}_+\times\mathbb{R}\\
    \displaystyle\frac{\partial M}{\partial t} &=& d_M\displaystyle\frac{\partial^2 M}{\partial x^2}+ (1-r)\gamma A-\mu_MM,\\
    \displaystyle\frac{\partial F}{\partial t} &=& d_F\displaystyle\frac{\partial^2 F}{\partial x^2}+\displaystyle\frac{ M}{M+M_T}r\gamma A-\mu_FF,\\
\end{array}
\right.
\end{equation}
where $d_F$ and $d_M$ denote fertilized females and males diffusion
rate, respectively. In addition, system (\ref{SIT-pde}) is considered
with non-negative and sufficiently smooth initial data. We will now address the question of existence of mono-stable and
bistable traveling wave solutions in system (\ref{SIT-pde}). Unfortunately, in our case, we cannot apply directly the results from Fang and Zhao \cite{Fang2009}. 

\noindent We also need to assume that
\begin{equation}\label{tech-assump}
\mu_F<\min\{\mu_M,\gamma+\mu_{A,1}\}.
\end{equation}
Assumption (\ref{tech-assump}) is also consistent with parameter values considered for \textit{Aedes spp.} in \cite{Anguelov2012TIS,Anguelov2019,Bliman2019,Strugarek2019}. 

\subsection{Existence and uniqueness of solutions for model (\ref{SIT-pde})} 
Here, we first address the question of
existence and uniqueness of \tcm{solutions} of the reaction-diffusion (RD)
system (\ref{SIT-pde}) in unbounded domains. For this purpose we
will use materials recalled in \ref{appendix-material}.

\par

Let $C_{ub}(\R)$ be the Banach space of bounded, uniformly
continuous function on $\R$ and, $$C^{2}_b(\R)=\{f\in C_{ub}(\R):
f'\in C_{ub}(\R),\quad  f''\in
C_{ub}(\R)\}.$$ $C_{ub}(\R)$ and $C^{2}_b(\R)$ are endowed wit the
following (sup) norms
\begin{equation}\label{borme-C-ub}
\|f\|_{C_{ub}(\R)}=\|f\|_{\infty}=\sup\limits_{x\in\R}|f(x)|
\end{equation}

and

\begin{equation}\label{norme-C-b}
\|g\|_{C_{b}^2(\R)}=\|g\|_{C_{ub}(\R)}+\|g'\|_{C_{ub}(\R)}+\|g''\|_{C_{ub}(\R)}.
\end{equation}

$C_{b}^2(\R)$ endowed with the norm $\|\cdot\|_{C_{b}^2(\R)}$ is a
Banach space. 

We set $w=(A,M,F)'$.  System (\ref{SIT-pde}) can be written as the abstract Cauchy problem
\begin{equation}\label{Abstract-form}
\left\{
 \begin{array}{l}
 \displaystyle\frac{d w}{d t} +A w= H(w),\\
 w(0)=w_0
 \end{array}
 \right.
\end{equation}
where in the Banach space $B=C_{ub}(\R)\times C_{ub}(\R)\times
C_{ub}(\R)$ we have, 
\begin{equation}\label{RD-setup}
\left\{
\begin{array}{l}
\eta=diag(0,d_M,d_F),\\
  D(A) = C^{2}_b(\R)\times C^{2}_b(\R)\times C^{2}_b(\R), \\
  Aw = -\eta w'',\\
  H: D(A) \rightarrow D(A),\\
   H(w)=\left(\phi F-(\gamma+\mu_{A,1}+\mu_{A,2}A)A,(1-r)\gamma A-\mu_MM, \displaystyle\frac{ M}{M+M_T}r\gamma A-\mu_FF\right)'.
\end{array}
\right.
\end{equation}
For $E\in\{C_{ub}(\R),C^{2}_b(\R)\}$ and $(a,b,c)\in E\times E\times E$, we define the norm
$$\|(a,b,c)\|_{E\times E\times E}=\|a\|_{E}+\|b\|_{E}+\|c\|_{E}.$$
From \cite[Theorem 2.1]{Rauch1978}, we
deduce the following Proposition \ref{Proposition-Global-existence}.

\begin{proposition}\label{Proposition-Global-existence}
For any $w_0\in B$ there is a positive constant $T > 0$ depending
only on $H$ and $\|w_0\|_{B}$, such that system (\ref{Abstract-form}) in
$[0, T]$, admits a unique local solution $w\in C([0, T], B)$ and
  \begin{equation}\label{integral-u}
    w(t)=S(t)w_0+\displaystyle\int_0^tS(t-\tau)H(w(\tau))d\tau, \quad
    \forall t\in[0, T]
\end{equation}
where $\{S(t)\}_{t\geq0}$ is the Gauss-Weierstrass $C_0-$semigroup
of contractions defined on the Banach space $B$ (see also
(\ref{Gauss-semigroup-n})-(\ref{Gauss-Rauch-Smoller}), page
\pageref{Gauss-semigroup-n}).
\end{proposition}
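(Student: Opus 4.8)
The plan is to recognise (\ref{Abstract-form})--(\ref{RD-setup}) as a semilinear Cauchy problem on $B$ fitting the hypotheses of the local existence theory for mild solutions in the form of \cite[Theorem 2.1]{Rauch1978}, and then to obtain $w$ by a contraction-mapping argument on $C([0,T],B)$. Two things have to be verified: (i) $-A$ generates a $C_0$-semigroup of contractions $\{S(t)\}_{t\geq0}$ on $B$; (ii) $H:B\to B$ is locally Lipschitz, i.e. Lipschitz on bounded subsets of $B$. Once these hold, the representation (\ref{integral-u}) and the uniqueness assertion follow by standard arguments.

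First I would identify the semigroup. Since $\eta=\text{diag}(0,d_M,d_F)$ with $d_M,d_F>0$, the operator $-A=\eta\,\partial_{xx}$ acts componentwise, so $S(t)=\text{diag}(\text{Id},G_{d_M}(t),G_{d_F}(t))$, where $G_d(t)$ is convolution with the Gaussian kernel $(4\pi d t)^{-1/2}e^{-x^2/(4dt)}$. Each $G_d(t)$ is a contraction $C_0$-semigroup on $C_{ub}(\R)$ (the kernel is a probability density, whence $\|G_d(t)f\|_\infty\leq\|f\|_\infty$, and strong continuity is classical), and the identity is trivially a contraction $C_0$-semigroup; taking the sum-norm on $B$ one gets that $\{S(t)\}_{t\geq0}$ is a $C_0$-semigroup of contractions on $B$. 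The degeneracy in the $A$-component is harmless here because only a $C_0$-semigroup is needed, not analyticity or smoothing; this is exactly the content of (\ref{Gauss-semigroup-n})--(\ref{Gauss-Rauch-Smoller}).

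Next I would verify that $H$ is well-defined on $B$ and Lipschitz on bounded sets. Splitting $H$ into its polynomial part --- quadratic in $A$ through $\mu_{A,2}A^2$ and affine in $(M,F)$ --- and the mating term $\frac{M}{M+M_T}\,r\gamma A$: the polynomial part is locally Lipschitz on balls of $B$ via elementary sup-norm estimates such as $\|A^2-\widetilde A^2\|_\infty\leq(\|A\|_\infty+\|\widetilde A\|_\infty)\|A-\widetilde A\|_\infty$. For the mating term, writing $g(s)=s/(s+M_T)$, one has $g\in C^\infty([0,\infty))$ with $0\leq g\leq1$ and $|g'(s)|=M_T/(s+M_T)^2\leq 1/M_T$, so $g$ is globally Lipschitz on $[0,\infty)$; hence $(M,A)\mapsto g(M)A$ is Lipschitz on bounded sets, and altogether $H$ maps $B$ continuously into $B$ and is Lipschitz on bounded sets (a routine chain-rule estimate using the bounded derivatives of $g$ also gives $H(D(A))\subset D(A)$). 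With $\|S(t)\|_{\mathcal{L}(B)}\leq1$ and $H$ Lipschitz with some constant $L_R$ on the closed ball $\overline{B}(0,R)\subset B$, one fixes $R=2\|w_0\|_B$ and chooses $T=T(\|w_0\|_B)$ small enough that the map $\Phi(w)(t)=S(t)w_0+\int_0^tS(t-\tau)H(w(\tau))\,d\tau$ sends the closed ball of radius $R$ in $C([0,T],B)$ into itself and is a contraction there (both reducing to inequalities of the type $T(\|H(0)\|_B+L_RR)\leq R/2$ and $TL_R<1$); the Banach fixed point theorem then yields the unique $w\in C([0,T],B)$ satisfying (\ref{integral-u}), and uniqueness on all of $C([0,T],B)$ follows from a Gronwall estimate.

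The main point requiring care is the mating term $\frac{M}{M+M_T}$: on $C_{ub}(\R)$ it has a pole at $M\equiv -M_T$ and is therefore neither defined nor globally Lipschitz on all of $B$, so the naive reading of ``for any $w_0\in B$'' needs to be reconciled with the model. Since $M_T>0$ is a fixed positive constant, the clean way is to replace $g$ by a smooth, globally Lipschitz extension $\widetilde g$ on $\R$ coinciding with $g$ on $[0,\infty)$, run the fixed-point argument above for the modified system, and then use positivity --- invariance of the cone $\{(A,M,F):A,M,F\geq0\}$, available because the reaction term is quasi-positive (cooperative) on that cone --- to conclude that the modified and original problems coincide for non-negative initial data, which is the only case of physical interest and the one used subsequently. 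Everything else, namely the semigroup properties and the Lipschitz bounds for the polynomial part, is routine.
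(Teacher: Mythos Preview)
Your proposal is correct and follows exactly the paper's approach, which simply invokes \cite[Theorem 2.1]{Rauch1978} without further detail; you have merely filled in the verification of its hypotheses (the contraction $C_0$-semigroup on $B$ and the local Lipschitz property of $H$) that the paper leaves implicit. Your attention to the singularity of the mating term $M/(M+M_T)$ at $M=-M_T$, and its resolution via a smooth globally Lipschitz extension combined with the positive invariance of the cone, is a genuine point the paper glosses over but which is needed for the clause ``for any $w_0\in B$'' to be taken literally.
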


In order to prove global (in time) existence of solutions system (\ref{Abstract-form}), we use the notion of invariant regions, see e.g. \cite[Chapter 14, pages 198-212]{Smoller1983} and \cite{Rauch1978}.

\begin{lemma}\label{boundeness-lemma}(Invariant rectangles)\\ Let $k_1$ and $k_2$ be two real numbers such that $k_1>0$ and $k_2\geq1$. The following results hold true.
	\begin{enumerate}
		\item Assume that $\rr\leq1$. The set 
		$$\Gamma_{\rr\leq1}=\left\{(A,M,F): (0,0,0)'\leq(A,M,F)'\leq \left(k_1,\dfrac{(1-r)\gamma}{\mu_M}k_1,\dfrac{r\gamma}{\mu_F}k_1\right)' \right\}$$ is positively invariant for system (\ref{Abstract-form}).
		\item Assume that $\rr>1$. The set 
		$$\Gamma_{\rr>1}=\left\{(A,M,F)':(0,0,0)'\leq (A,M,F)'\leq \left(k_2A^*,k_2M^*,k_2F^*\right)'\right\}$$ is positively invariant for system (\ref{Abstract-form}).
	\end{enumerate}
	
\end{lemma}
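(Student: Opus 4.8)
The plan is to apply the classical invariant rectangle criterion for reaction-diffusion systems with a diagonal diffusion matrix (see \cite[Chapter 14]{Smoller1983}, and also \cite{Rauch1978}). Since $\eta=\mathrm{diag}(0,d_M,d_F)$ is diagonal, each face of a coordinate rectangle $\prod_{i=1}^{3}[\ell_i,u_i]$ lies in the zero set of a linear functional whose gradient is $\pm e_i$, hence automatically a left eigenvector of $\eta$, and the convexity hypothesis of the criterion is trivially met. The criterion then reduces to sign conditions on the reaction term $H$ of \eqref{RD-setup}: the rectangle is positively invariant provided that, for $i=1,2,3$, $H_i(w)\ge 0$ whenever $w_i=\ell_i$ and $w\in\Gamma$, and $H_i(w)\le 0$ whenever $w_i=u_i$ and $w\in\Gamma$. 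So I would verify these inequalities, using throughout the standing assumption $0<r<1$ and the convention $\frac{M}{M+M_T}:=0$ for $M=0$, which gives $0\le\frac{M}{M+M_T}\le1$ on $\R_+^{3}$. The lower faces are common to both parts: on $\{A=0\}$, $H_1=\phi F\ge0$; on $\{M=0\}$, $H_2=(1-r)\gamma A\ge0$; on $\{F=0\}$, $H_3=\frac{M}{M+M_T}r\gamma A\ge0$, since the other coordinates are nonnegative on $\Gamma$, which already yields positivity of solutions.

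For part~1 ($\rr\le1$), with upper corner $u=\bigl(k_1,\frac{(1-r)\gamma}{\mu_M}k_1,\frac{r\gamma}{\mu_F}k_1\bigr)'$, I would argue as follows. On $\{A=k_1\}$, bounding $F\le u_3$ and using $\frac{\phi r\gamma}{\mu_F}=\rr(\gamma+\mu_{A,1})$ gives $H_1\le k_1\bigl[(\gamma+\mu_{A,1})(\rr-1)-\mu_{A,2}k_1\bigr]\le-\mu_{A,2}k_1^{2}<0$. On $\{M=u_2\}$, $H_2=(1-r)\gamma(A-k_1)\le0$. On $\{F=u_3\}$, $H_3=r\gamma\bigl(\frac{M}{M+M_T}A-k_1\bigr)\le0$ since $\frac{M}{M+M_T}\le1$ and $A\le k_1$. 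For part~2 ($\rr>1$), with upper corner $u=(k_2A^{*},k_2M^{*},k_2F^{*})'$, I would use that $E^{*}$ solves the $M_T=0$ equilibrium system, i.e. $\phi F^{*}=(\gamma+\mu_{A,1})A^{*}+\mu_{A,2}(A^{*})^{2}$, $(1-r)\gamma A^{*}=\mu_M M^{*}$ and $r\gamma A^{*}=\mu_F F^{*}$. On $\{A=k_2A^{*}\}$, using $F\le k_2F^{*}$ and the first relation, $H_1\le k_2\mu_{A,2}(A^{*})^{2}(1-k_2)\le0$ because $k_2\ge1$; on $\{M=k_2M^{*}\}$, $H_2\le k_2\bigl[(1-r)\gamma A^{*}-\mu_M M^{*}\bigr]=0$ (using $A\le k_2A^{*}$); on $\{F=k_2F^{*}\}$, $H_3\le k_2\bigl[r\gamma A^{*}-\mu_F F^{*}\bigr]=0$ (using $A\le k_2A^{*}$ and $\frac{M}{M+M_T}\le1$). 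Note that these upper-face conditions hold for every $M_T\ge0$, since increasing $M_T$ only lowers the recruitment term in $H_3$. As both rectangles are bounded, combining invariance with Proposition~\ref{Proposition-Global-existence} upgrades the local solution into a global one with values in $\Gamma$.

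The step I expect to be the main obstacle is justifying the invariant rectangle criterion in our precise setting, since it is classically stated for \emph{uniformly parabolic} systems whereas $\eta$ here is only positive semidefinite (no diffusion in $A$). I see two remedies. One is to invoke a version of the invariant region theorem valid for partially degenerate systems. The more elementary alternative is a vanishing viscosity argument: replace the $A$-equation's diffusion coefficient by $\varepsilon>0$, apply the standard theorem to obtain invariance of $\Gamma$ uniformly in $\varepsilon$, and let $\varepsilon\to0^{+}$ using continuous dependence of solutions of \eqref{Abstract-form} on the diffusion matrix; or, even more directly, observe that for each fixed $x$ the first component obeys the scalar ODE $\dot A=\phi F-(\gamma+\mu_{A,1}+\mu_{A,2}A)A$, whose right-hand side is $\ge0$ at $A=0$ and $\le0$ at $A=u_1$ on $\Gamma$, so a pointwise comparison traps $A$ in $[0,u_1]$, after which the genuinely parabolic $M$- and $F$-equations are handled by the usual parabolic comparison principle. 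Finally, the $0/0$ indeterminacy of $\frac{M}{M+M_T}$ at $M=M_T=0$ is harmless: it occurs only on the face $\{M=0\}$, where this term multiplies $M$ and the relevant sign condition holds for the continuous extension.
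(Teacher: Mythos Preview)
Your proof is correct and follows essentially the same approach as the paper: invoke the invariant-rectangle criterion of \cite[Chapter~14]{Smoller1983} and verify the sign conditions $\nabla G\cdot H\le 0$ on each face, with the same computations (the paper's arithmetic on the $A$-face in part~2 is in fact slightly less tidy than yours). Regarding your main concern about the degenerate diffusion in the $A$-equation, the paper sidesteps this by also citing \cite{Rauch1978}, where the invariant-region theory is developed precisely for such partially degenerate systems, so no vanishing-viscosity or ad~hoc pointwise-ODE argument is needed.
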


\begin{proof}
	See \ref{AppendixA0}.
\end{proof}

From the local existence result and the existence of invariant rectangles, we deduce the following global existence result (see e.g. \cite[page 307]{Logan2008}, \cite{Rauch1978}).

\begin{proposition}\label{Proposition-Global-existencebis}
	For any $w_0\in B$, system (\ref{Abstract-form}) admits a unique global solution $w\in C([0, +\infty), B)$.
\end{proposition}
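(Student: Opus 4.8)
The plan is to promote the local solution of Proposition~\ref{Proposition-Global-existence} to a global one by a continuation argument, the a priori bound needed for the continuation being exactly the positive invariance of the rectangles in Lemma~\ref{boundeness-lemma}. The crucial feature of Proposition~\ref{Proposition-Global-existence} is that the length $T$ of the guaranteed existence interval depends only on $H$ and on $\|w_0\|_B$; consequently, once one bounds $\|w(t)\|_B$ uniformly along a solution, the solution can be restarted from any of its values and extended by a fixed amount of time, hence prolonged to all of $[0,+\infty)$.

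Concretely, fix $w_0\in B$. Since the initial data of (\ref{SIT-pde}) are assumed non-negative, $w_0\geq\mathbf{0}$, and since $w_0\in B=C_{ub}(\R)\times C_{ub}(\R)\times C_{ub}(\R)$ is bounded, we may choose, in the case $\rr\leq1$, a constant $k_1>0$ so large that
$$\mathbf{0}\leq w_0\leq\left(k_1,\dfrac{(1-r)\gamma}{\mu_M}\,k_1,\dfrac{r\gamma}{\mu_F}\,k_1\right)',$$
so that $w_0\in\Gamma_{\rr\leq1}$; in the case $\rr>1$, we choose $k_2\geq1$ so large that $\mathbf{0}\leq w_0\leq(k_2A^*,k_2M^*,k_2F^*)'$, so that $w_0\in\Gamma_{\rr>1}$. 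By Lemma~\ref{boundeness-lemma}, the unique local solution $w$ emanating from $w_0$ satisfies $w(t)\in\Gamma_{\rr\leq1}$ (resp. $w(t)\in\Gamma_{\rr>1}$) for every $t$ in its interval of existence; in particular there is a constant $C=C(w_0)$ such that $\|w(t)\|_B\leq C$ throughout that interval.

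Let $[0,T_{\max})$ be the maximal interval of existence of $w$. If $T_{\max}<+\infty$, apply Proposition~\ref{Proposition-Global-existence} with initial datum $w(t_0)$ for $t_0$ close to $T_{\max}$: because $\|w(t_0)\|_B\leq C$, the existence time provided is bounded below by some $\tau=\tau(C)>0$ independent of $t_0$, so the solution extends to $[0,t_0+\tau)$ with $t_0+\tau>T_{\max}$ as soon as $t_0>T_{\max}-\tau$, contradicting maximality. Hence $T_{\max}=+\infty$ and $w\in C([0,+\infty),B)$. Uniqueness on $[0,+\infty)$ follows from the local uniqueness in Proposition~\ref{Proposition-Global-existence} together with the standard connectedness argument applied to the (closed and open) set of times where two global solutions with the same datum agree. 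The only genuinely nontrivial ingredient is Lemma~\ref{boundeness-lemma} itself, which rests on the convexity of the rectangles and on the reaction field $H$ pointing inward along their faces (a Chueh--Conley--Smoller type criterion, see \cite[Chapter 14]{Smoller1983}, \cite{Rauch1978}); granting it, and the local Lipschitz character of $H$ on $B$ already used in Proposition~\ref{Proposition-Global-existence}, the argument above is routine, so I expect no real obstacle here.
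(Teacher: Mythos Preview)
Your argument is correct and is exactly the approach the paper takes: the paper's ``proof'' consists of the single sentence that global existence follows from the local existence result (Proposition~\ref{Proposition-Global-existence}) and the invariant rectangles (Lemma~\ref{boundeness-lemma}), citing \cite[page 307]{Logan2008} and \cite{Rauch1978}. You have simply written out the standard continuation argument in full, including the observation that the non-negativity of the initial data (assumed for system~(\ref{SIT-pde})) lets you enclose $w_0$ in one of the rectangles of Lemma~\ref{boundeness-lemma}.
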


\subsection{Existence of traveling waves for model (\ref{SIT-pde})}

In compact form, model (\ref{SIT-pde}) can be rewritten as follows

\begin{equation}\label{compact-STI-pde}
\displaystyle\frac{\partial U}{\partial
t}=D\displaystyle\frac{\partial^2 U}{\partial x^2}+H_{M_T}(U), \quad
(t,x)\in\mathbb{R}_+\times\mathbb{R},
\end{equation}
where
\begin{equation}\label{definition-compact-STI-pde}
\begin{array}{rcl}
  U & = & (A, M, F)', \\
  D & = & diag(0, d_M, d_F), \\
  H_{M_T}(U) & = & \left(
               \begin{array}{c}
                \phi F-(\gamma+\mu_{A,1}+\mu_{A,2}A)A \\
                 (1-r)\gamma A-\mu_MM \\
                 \displaystyle\frac{ M}{M+M_T}r\gamma A-\mu_FF \\
               \end{array}
             \right).
\end{array}
\end{equation}
To study the traveling wave problem, we consider solution of
(\ref{definition-compact-STI-pde}) of the form
\begin{equation}\label{tw-setting}
    \begin{array}{rcl}
      A(t,x) & = & A(z), \quad \mbox{ with }z=x+ct\\
      M(t,x) & = & M(z), \\
      F(t,x) & = & F(z),
    \end{array}
\end{equation}
where $c$ is the wave speed. Therefore, \tcm{a} traveling wave solution of
(\ref{compact-STI-pde}) satisfies

\begin{equation}\label{tw-compact-STI-pde}
DU''-cU'+H_{M_T}(U)=0,
\end{equation}
where $':=\frac{d}{dz}$. We will further assume that
$$U(-\infty)=E_{-\infty}, \qquad U(+\infty)=E_{+\infty}$$
where $E_{-\infty}$ and $E_{+\infty}\in\{E^*;E_1;E_2\}$ are two distinct homogeneous
equilibria of (\ref{compact-STI-pde}) with $E_{-\infty}<E_{+\infty}$.

\subsubsection{Existence of monostable traveling waves for system
(\ref{SIT-pde})}

In this section, we first recall some useful results \cite{Fang2009}. Consider the $n$-dimensional ($n\geq2$) reaction-diffusion system
\begin{equation}\label{RD-Fang}
\displaystyle\frac{\partial u_i}{\partial
t}=d_i\displaystyle\frac{\partial^2 u_i}{\partial
x^2}+f_i(u_1,...,u_n), \quad
(t,x)\in\mathbb{R}_+\times\mathbb{R},\quad 1\leq i\leq n,
\end{equation}
where, some, but not all, diffusion coefficients $d_i$ are zero, and
the others are positive. \tcm{Let us set $D:=diag(d_1,...,d_n)$.}

\noindent Recall that for a square matrix $Y$, the stability modulus is defined as follows: 
$$s(Y)=\max\{\text{Re}\sigma / \det(\sigma I-Y)=0\}.$$ 
To prove the existence of monostable
traveling wave solutions for system (\ref{RD-Fang}), Fang and Zhao \cite{Fang2009}
consider the following assumptions:

\vspace{0.25cm}
\noindent \emph{(\textbf{H})} Assume that
$f=(f_1,...,f_n)':\mathbb{R}^n\rightarrow \mathbb{R}^n$ satisfies the
following assumptions:
\begin{enumerate}
    \item $f$ is continuous with $f(\textbf{0})=f(\textbf{1})=\textbf{0}$ and there is no $\nu$ other than \textbf{0} and \textbf{1} such
that $f(\nu) =$ \textbf{0} with \textbf{0} $\leq\nu\leq$ \textbf{1}.
    \item System (\ref{RD-Fang}) is cooperative.
    \item $f(u)$ is piecewise continuously differentiable in $u$ for
    \textbf{0} $\leq\nu\leq$ \textbf{1} and differentiable at \textbf{0}, and the matrix $f' (\textbf{0})$ is irreducible, 
    with $s(f'(\textbf{0}))>0$.
\end{enumerate}

For practical applications, $f'(\bf{0})$ irreducible, in item \emph{(\textbf{H})}$_{3}$, is quite
restrictive. However in the proof of their results, Fang and Zhao \cite{Fang2009} needed only a consequence of this 
irreducibility property, deduced also from the Perron-\tcm{Frobenius} Theorem (see \cite[chapter 4, section 3]{Smith1995}). So we can weaken this irreducibility assumption and consider its consequence, such that \emph{(\textbf{H})} now becomes:

\vspace{0.25cm}
\noindent \emph{(\textbf{H'})} Assume that
$f=(f_1,...,f_n)':\mathbb{R}^n\rightarrow \mathbb{R}^n$ satisfies assumptions (\textbf{H})$_1$, (\textbf{H})$_2$, and 
\begin{enumerate}
    \item [(\textbf{H}')$_3$] $f(u)$ is piecewise continuously differentiable in $u$ for
    \textbf{0} $\leq\nu\leq$ \textbf{1} and differentiable at \textbf{0}, and for $\mu>0$ the matrix $A(\mu):=\mu^2D+f' (\emph{\textbf{0}})$ is
    such that $\lambda(\mu)=s(A(\mu))>0$ is a simple eigenvalue
of $A(\mu)$ with a strongly positive eigenvector
$v(\mu)=(v_1(\mu),...,v_n(\mu))$ with $\|v(\mu)\|=1$.
\end{enumerate}

For $\mu>0$, we define the function $\Phi(\mu):=\lambda(\mu)/\mu>0$ and
$\overline{c}:=\inf\limits_{\mu>0}\Phi(\mu)>0.$ Suppose also that
$\overline{\mu}\in(0,+\infty)$ is the value of $\mu$ at which
$\Phi(\mu)$ attains its infimum. The following result is valid

\begin{lemma}\label{lemme-Fang-Zhao}
Assume that \emph{(\textbf{H'})} holds. Let $\phi\in
\{u\in\mathcal{C}:\textbf{0}\leq u\leq \textbf{1}\}$ and
$u(t,x;\phi)$ be the unique solution of (the integral form of)
(\ref{RD-Fang}) through $\phi$. Then there exists a real number
$c^*\geq\overline{c}>0$ such that the following statements are
valid:
\begin{itemize}
    \item[(i)] If $\phi$ has compact support, then $\lim\limits_{t\rightarrow+\infty, |x|\geq
    ct}u(t,x;\phi)=\textbf{0}$, $\forall c>c^*$.
    \item[(ii)] For any $c \in (0, c^*)$ and $r > 0$, there is a positive number $R_r$ such that for any $\phi\in
\{u\in\mathcal{C}:\textbf{0}\leq u\leq \textbf{1}\}$ with $\phi\geq
\textbf{r}$ on an interval of length $2R_r$ , there holds
$\lim\limits_{t\rightarrow+\infty, |x|\leq
    ct}u(t,x;\phi)=\textbf{1}$.
    \item[(iii)] If, in addition, $f(\min\{\rho v(\overline{\mu}),\textbf{\emph{1}}\})\leq\rho
    f'(\textbf{\emph{0}})v(\overline{\mu})$, $\forall\rho>0$, then
    $c^*=\overline{c}$.
\end{itemize}
\end{lemma}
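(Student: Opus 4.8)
The plan is to recognize that Lemma~\ref{lemme-Fang-Zhao} is essentially the content of Fang--Zhao \cite{Fang2009}, so the task is not to reprove their theorem from scratch but to verify that their argument goes through with assumption \emph{(\textbf{H})}$_3$ replaced by the weaker \emph{(\textbf{H}')}$_3$. First I would recall the structure of the Fang--Zhao proof: statements (i)--(ii) are obtained by viewing \eqref{RD-Fang} as a monotone (cooperative) semiflow and invoking the abstract spreading-speed theory for monotone semiflows (Liang--Zhao, Weinberger, etc.), so that the key quantities are the ``linear spreading speed'' $\overline c$ coming from the linearization at $\mathbf{0}$ and the ``nonlinear spreading speed'' $c^*$ coming from the full monotone flow, with $c^*\ge\overline c$ always. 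The only place where irreducibility of $f'(\mathbf 0)$ was used is in analyzing the linearized equation $\partial_t u = D u_{xx} + f'(\mathbf 0)u$: one seeks exponential solutions $u(t,x)=e^{\mu(x+ct)}v$, which forces $A(\mu)v=(\mu c + \text{something})v$... more precisely one needs that for each $\mu>0$ the matrix $A(\mu)=\mu^2 D + f'(\mathbf 0)$ has a real principal eigenvalue $\lambda(\mu)=s(A(\mu))$ that is simple and has a strongly positive eigenvector, so that $\Phi(\mu)=\lambda(\mu)/\mu$ is a well-defined smooth function whose infimum $\overline c$ is the linear speed. Under irreducibility this is the classical Perron--Frobenius statement for the (essentially nonnegative, i.e.\ cooperative) matrix $A(\mu)$; but exactly the same conclusion is \emph{hypothesized directly} in \emph{(\textbf{H}')}$_3$. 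Hence I would go through the Fang--Zhao proof line by line, replacing every appeal to ``$f'(\mathbf 0)$ irreducible $\Rightarrow$ Perron--Frobenius'' with a direct appeal to \emph{(\textbf{H}')}$_3$, and check that no other use of irreducibility occurs.

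Concretely, the steps are: (1) State that \eqref{RD-Fang} under \emph{(\textbf{H}')}$_1$--\emph{(\textbf{H}')}$_2$ generates a monotone semiflow $Q_t$ on $\{u\in\mathcal C:\mathbf 0\le u\le\mathbf 1\}$ with $\mathbf 0$ and $\mathbf 1$ as the only fixed points; this is unchanged from \cite{Fang2009} since it does not use \emph{(\textbf{H})}$_3$. (2) Linearize at $\mathbf 0$ and show, using \emph{(\textbf{H}')}$_3$, that for each $\mu>0$ the solution operator of the linearization restricted to the ansatz $e^{\mu z}$ has principal eigenvalue $\lambda(\mu)$ with strongly positive eigenvector $v(\mu)$, so $\Phi(\mu)=\lambda(\mu)/\mu$ attains a positive minimum $\overline c=\Phi(\overline\mu)$; here one also needs $\Phi(\mu)\to\infty$ as $\mu\to0^+$ and as $\mu\to\infty$, which follows from $s(f'(\mathbf 0))>0$ and $\mu^2 D$ blowing up on the diffusive coordinates (the degenerate coordinates being handled exactly as in \cite{Fang2009}). (3) Apply the general monotone-semiflow spreading-speed theorem to get a nonlinear speed $c^*$ with the dichotomy in (i)--(ii) and $c^*\ge\overline c$. (4) For (iii), recall that the sub/supersolution comparison argument in \cite{Fang2009} uses the test function $\min\{\rho e^{\overline\mu z}v(\overline\mu),\mathbf 1\}$; the hypothesis $f(\min\{\rho v(\overline\mu),\mathbf 1\})\le\rho f'(\mathbf 0)v(\overline\mu)$ is precisely what makes this a supersolution at speed $\overline c$, which combined with $c^*\ge\overline c$ forces $c^*=\overline c$. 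Throughout, the constants $R_r$, $T$ etc.\ are produced verbatim by the cited machinery.

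The main obstacle — and the only real content beyond bookkeeping — is Step (2): one must confirm that \emph{(\textbf{H}')}$_3$ genuinely supplies everything the proof extracted from irreducibility, in particular (a) that $\lambda(\mu)$ depends smoothly (at least continuously, and convexly in an appropriate sense) on $\mu$ so that $\overline\mu$ and $\overline c$ are well-defined, and (b) that the strong positivity of $v(\mu)$ is enough to run the comparison arguments that build front-like sub- and supersolutions. For (a), simplicity of the eigenvalue $\lambda(\mu)$ in \emph{(\textbf{H}')}$_3$ gives analytic dependence of $\lambda(\mu)$ and $v(\mu)$ on $\mu$ by standard perturbation theory, so this is safe. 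For (b), the only subtlety is that Fang--Zhao occasionally use that \emph{all} components of a solution starting from a nontrivial nonnegative datum become strictly positive after time $t>0$; in the cooperative but only partially diffusive setting this ``strong positivity/irreducibility of the semiflow'' must be checked — but it is exactly the kind of property that \eqref{RD-Fang} inherits from the structure of $f$ (coupling of the $A$, $M$, $F$ equations) rather than from $f'(\mathbf 0)$ being irreducible as a matrix, so it is consistent with \emph{(\textbf{H}')}. I would therefore devote the bulk of the written proof to verifying these two points and then simply cite \cite{Fang2009} for the remainder, noting that every step there used only the consequences now packaged into \emph{(\textbf{H}')}$_3$.
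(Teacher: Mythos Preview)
Your proposal is correct and takes exactly the same approach as the paper: both recognize that Fang--Zhao's proof of \cite[Lemma 2.3]{Fang2009} only invokes irreducibility of $f'(\mathbf 0)$ through its Perron--Frobenius consequence (a simple principal eigenvalue $\lambda(\mu)=s(A(\mu))>0$ with strongly positive eigenvector), which is precisely what \emph{(\textbf{H}')}$_3$ postulates directly. The paper's proof is in fact a one-line citation to \cite{Fang2009} with \emph{(\textbf{H'})} in place of \emph{(\textbf{H})}, whereas you spell out in more detail the verification that no other use of irreducibility occurs --- but the substance is identical.
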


\begin{proof}
The results follow from 
\cite[Lemma 2.3]{Fang2009} by considering assumption \emph{(\textbf{H'})} instead of \emph{(\textbf{H})}.
\end{proof}

We observe that in Lemma \ref{lemme-Fang-Zhao}, assumption stated in
item $(iii)$ implies that $f(u)$ is dominated by its linearization
at \textbf{0} in the direction of $v(\overline{\mu})$ and this
ensures the so-called linear determinacy property (\cite{Weinberger2002}, \cite{Li2005} and
references therein).
\par

Fang and Zhao \cite{Fang2009} further consider the following
assumption.\\
 \emph{(\textbf{K})} Assume that
$f=(f_1,...,f_n)':\mathbb{R}^n\rightarrow \mathbb{R}^n$ \tcm{is such that}:
\begin{enumerate}
    \item $f$ is continuous with $f(\textbf{0})=f(\textbf{1})=\textbf{0}$ and there is no $\nu$ other than \textbf{0} and \textbf{1} such
that $f(\nu) =$ \textbf{0} with \textbf{0} $\leq\nu\leq$ \textbf{1}.
    \item System (\ref{RD-Fang}) is cooperative.
    \item $f(u)$ is piecewise continuously differentiable in $u$ for
    \textbf{0} $\leq\nu\leq$ \textbf{1} and differentiable at \textbf{0}, and the matrix $f'(\textbf{0})$ is irreducible
    with $s(f'(\textbf{0}))>0$.
    \item There exist $a>0$, $\sigma>1$ and $r>0$ such that $f(u)\geq
    f'(\textbf{0})u-a\|u\|^\sigma$\textbf{1} for all
    \textbf{0}$\leq u\leq\textbf{\emph{r}}$.
    \item For any $\rho>0$, $f(\min\{\rho v(\mu),\textbf{1}\})\leq\rho
    f'(\textbf{0})v(\mu)$, $\forall \mu\in(0,\overline{\mu}]$, where $\overline{\mu}$ is the
value of $\mu$ at which $\Phi(\mu)$ attains its infimum.
\end{enumerate}

As previously, we weaken the third statement of \emph{(\textbf{K})}
and it now reads as:\\

\emph{(\textbf{K'})} Assume that
$f=(f_1,...,f_n):\mathbb{R}^n\rightarrow \mathbb{R}^n$ satisfies assumptions (\textbf{K})$_1$, (\textbf{K})$_2$, (\textbf{K})$_4$, (\textbf{K})$_5$, and 
\begin{enumerate}
    \item [(\textbf{K}')$_3$] $f(u)$ is piecewise continuously differentiable in $u$ for
    \textbf{0} $\leq\nu\leq$ \textbf{1} and differentiable at \textbf{0}, and for $\mu>0$ the matrix $A(\mu):=\mu^2D+f' (\textbf{0})$ is
    such that $\lambda(\mu)=s(A(\mu))>0$ is a simple eigenvalue
of $A(\mu)$ with a strongly positive eigenvector
$v(\mu)=(v_1(\mu),...,v_n(\mu))$ with $\|v(\mu)\|=1$.
\end{enumerate}
Then, the following results hold
%

\begin{theorem}\label{theorem-Fang-Zhao-monostable}
Assume that \emph{(\textbf{K'})} holds and let $c^*$ be defined as
in Lemma \ref{lemme-Fang-Zhao}. Then for each $c\geq c^*$, system
(\ref{RD-Fang}) has a nondecreasing wavefront $U(x + ct)$ connecting
\textbf{\emph{0}} and
\textbf{\emph{1}}; while for any $c \in (0, c^*)$, there is
no wavefront $U(x + ct)$ connecting \textbf{\emph{0}} and
\textbf{\emph{1}}.
\end{theorem}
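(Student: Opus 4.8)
The plan is to run, for system~(\ref{RD-Fang}), the argument that Fang and Zhao use under hypothesis~(\textbf{K}), while checking at every step that the only role played by the irreducibility of $f'(\textbf{0})$ is through its Perron--Frobenius consequence: cooperativity forces $f'(\textbf{0})$ to have non-negative off-diagonal entries, $\mu^2D$ is a non-negative diagonal matrix, so $A(\mu)=\mu^2D+f'(\textbf{0})$ is quasi-positive for every $\mu>0$, and irreducibility would give that $\lambda(\mu)=s(A(\mu))$ is a simple eigenvalue carrying a strongly positive eigenvector $v(\mu)$ --- which is exactly what~(\textbf{K'})$_3$ postulates outright. Thus the whole construction goes through with~(\textbf{K'}) in place of~(\textbf{K}), and the substance of the proof is precisely to justify this reduction. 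Note that the spreading speed $c^*\ge\overline c>0$ and the spreading dichotomy are already available from Lemma~\ref{lemme-Fang-Zhao}.

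For $c\ge c^*$ I would construct a nondecreasing wavefront by the super/sub-solution and monotone iteration method. Since $\Phi(\mu)=\lambda(\mu)/\mu$ attains its infimum $\overline c\le c^*$ at $\overline\mu$ and blows up as $\mu\to0^+$ (because $\lambda(0)=s(f'(\textbf{0}))>0$), for each $c\ge c^*$ one can pick $\mu=\mu(c)\in(0,\overline\mu]$ with $\lambda(\mu)=c\mu$; then, using~(\textbf{K})$_5$, the profile $\overline U(z):=\min\{e^{\mu z}v(\mu),\textbf{1}\}$ is a (generalized) supersolution of $DU''-cU'+f(U)=0$, and, using~(\textbf{K})$_4$, a subsolution of the form $\underline U(z):=\max\{e^{\mu z}v(\mu)-q\,e^{(\mu+\delta)z}\textbf{1},\textbf{0}\}$ with $q$ large and $\delta>0$ small can be fitted below it, positive on a left half-line. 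Monotone iteration between $\underline U$ and $\overline U$ --- written for the profile system with the zero-diffusion components recovered from their first-order profile equations, the device Fang and Zhao use to bypass the absence of smoothing in those directions --- yields a profile $U$ with $\underline U\le U\le\overline U$, hence $U(-\infty)=\textbf{0}$ and $U\not\equiv\textbf{0}$. Monotonicity of $U$ in $z$ follows from the sliding/sweeping method applied to the cooperative profile system; being nondecreasing and bounded, $U$ has a limit $U(+\infty)$, which is a positive fixed point of the kinetics and therefore equals $\textbf{1}$ by~(\textbf{K})$_1$. The critical speed $c=c^*$ is then reached by passing to the limit along a sequence of waves with speeds $c_n\downarrow c^*$: translation invariance normalizes the profiles, the smoothing of the non-degenerate components (together with the algebraic/first-order recovery of the degenerate ones) gives the compactness needed to extract a limit, and the estimates of Lemma~\ref{lemme-Fang-Zhao} keep this limit profile nontrivial.

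Nonexistence for $c\in(0,c^*)$ follows directly from Lemma~\ref{lemme-Fang-Zhao}(ii). Suppose $U(x+ct)$ were a wavefront with $0<c<c^*$ connecting $\textbf{0}$ at $-\infty$ and $\textbf{1}$ at $+\infty$; then $U\in\{u\in\mathcal{C}:\textbf{0}\le u\le\textbf{1}\}$, and picking $r\in(0,1)$ we have $U\ge\textbf{r}$ on $[L,L+2R_r]$ for $L$ large. Taking $\phi:=U$ as initial datum gives $u(t,x;\phi)=U(x+ct)$, and Lemma~\ref{lemme-Fang-Zhao}(ii), applied with any $c''\in(c,c^*)$, forces $U(x+ct)\to\textbf{1}$ uniformly on $\{|x|\le c''t\}$ as $t\to+\infty$. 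Evaluating at $x=-c''t$ gives $U((c-c'')t)\to\textbf{0}$ since $c-c''<0$ and $U(-\infty)=\textbf{0}$, a contradiction.

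The step I expect to be the main obstacle is not the algebra of the sub/supersolutions but the functional-analytic bookkeeping forced by the partial degeneracy of $D$: in every compactness or limiting step --- asymptotic compactness of the solution semiflow of~(\ref{RD-Fang}), the monotone iteration for the profile, and the extraction of a nontrivial profile at $c=c^*$ --- one cannot rely on smoothing in the zero-diffusion directions and must instead recover those components through their non-diffusive profile equations, exactly as in~\cite{Fang2009}. Running in parallel with this is the verification that none of these steps secretly uses irreducibility of $f'(\textbf{0})$ beyond the simple-principal-eigenvalue property already encoded in~(\textbf{K'})$_3$. Confirming both points, line by line against~\cite{Fang2009}, is the real content of the argument.
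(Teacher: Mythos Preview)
Your proposal is correct and takes essentially the same approach as the paper: the paper's own proof is the single sentence ``The results follow from \cite[Theorem 3.1]{Fang2009} by considering (\textbf{K'}) instead of (\textbf{K}),'' and your write-up simply unpacks what that entails --- the Perron--Frobenius consequence encoded in (\textbf{K'})$_3$ is the only place irreducibility enters, the super/subsolution profiles and monotone iteration go through verbatim, and nonexistence below $c^*$ comes from the spreading estimate of Lemma~\ref{lemme-Fang-Zhao}(ii). Your level of detail far exceeds what the paper provides, but the route is identical.
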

\begin{proof}
The results follow from 
\cite[Theorem 3.1]{Fang2009} by considering (\textbf{K'}) instead of (\textbf{K}).
\end{proof}

Now we are in position to study the existence of monostable traveling wave solutions for model
(\ref{compact-STI-pde}) when no SIT control occurs, i.e. $M_T=0$. System (\ref{compact-STI-pde}) becomes
\begin{equation}\label{H-0}
   H_{M_T}(U)=H_{0}(U)=\left(
               \begin{array}{c}
                \phi F-(\gamma+\mu_{A,1}+\mu_{A,2}A)A \\
                 (1-r)\gamma A-\mu_MM \\
                 r\gamma A-\mu_FF \\
               \end{array}
             \right).
\end{equation}

From section \ref{preliminay}, we deduce that $H_0(U)=\textbf{0}$ has two solutions
$E_0=\textbf{0}$ and $E^*=(A^*,M^*,F^*)^T$, with $s(H'_0(E_0))>0$ and $s(H'_0(E^*))<0$ where $H'_0(U)$ denotes the
Jacobian matrix of $H_0$ at $U$. To follow the ideas developed by
Fang and Zhao \cite{Fang2009} and for sake of clarity, we
first normalize system (\ref{compact-STI-pde}) -(\ref{H-0}). For
this purpose, we set
$$a=A/A^*,\quad m=M/M^*,\quad f=F/F^*.$$ Thus, system (\ref{compact-STI-pde})
-(\ref{H-0}) becomes
\begin{equation}\label{compact-STI-pde-normalise}
\displaystyle\frac{\partial u}{\partial
t}=D\displaystyle\frac{\partial^2 u}{\partial x^2}+h_0(u), \quad
(t,x)\in\mathbb{R}_+\times\mathbb{R},
\end{equation}
where
\begin{equation}\label{definition-compact-STI-pde}
\begin{array}{lcl}
  u & = & (a, m, f)^T, \\
  D & = & diag(0, d_M, d_F), \\
  h_0(u) & = & \left(
               \begin{array}{c}
                (\gamma+\mu_{A,1})(\rr f-a-(\rr-1)a^2) \\
                 \mu_M(a-m) \\
                 \mu_F(a-f) \\
               \end{array}
             \right).
\end{array}
\end{equation}
Thus, obviously, system (\ref{compact-STI-pde-normalise}) is cooperative and admits only two homogeneous
equilibria, $e_0=\bf{0}$ and $e^*=\bf{1}$, such that (\textbf{H'})$_1$ and (\textbf{H'})$_2$ are verified. Similarly, since the right hand-side is a two-order polynomial, it is easy to show that (\textbf{K'})$_4$ holds too, i.e. $h_0(u)\geq h'_0(\textbf{0})u-(\rr-1)(\mu_{A,1}+\gamma)\|u\|^2 \textbf{1}$, for all $\textbf{0}\leq u \leq \textbf{1}$. 

\noindent Now, we need to check (\textbf{K'})$_3$ and (\textbf{K'})$_5$. For $\mu>0$ let us define
\begin{equation}\label{omega-mu}
\omega(\mu)=\mu^2D+h_0'(\textbf{0})=\left(
\begin{array}{ccc}
-(\mu_{A,1}+\gamma) & 0 & \rr(\mu_{A,1}+\gamma) \\
\mu_M & -\mu_M+\mu^2d_M & 0 \\
\mu_F & 0 & -\mu_F+\mu^2d_F \\
\end{array}
\right).
\end{equation}
The eigenvalues of $\omega(\mu)$ are $\sigma_3=-\mu_M+\mu^2d_M$ and the 
solutions of the second order equation
\begin{equation}
\label{rootsigma}
\sigma^2+\sigma(-\mu^2d_F+\mu_F+\mu_{A,1}+\gamma)+\mu_F(\mu_{A,1}+\gamma)(1-\rr)-(\mu_{A,1}+\gamma)\mu^2d_F=0.
\end{equation}
Since 
$\Delta=(-\mu^2d_F+\mu_F+\mu_{A,1}+\gamma)^2+4\mu_F(\mu_{A,1}+\gamma)(\rr-1)+4(\mu_{A,1}+\gamma)\mu^2d_F>0$, we deduce that
$$\sigma_1=\displaystyle\frac{-(-\mu^2d_F+\mu_F+\mu_{A,1}+\gamma)-\sqrt{\Delta}}{2}<0,$$

$$\sigma_2=\displaystyle\frac{-(-\mu^2d_F+\mu_F+\mu_{A,1}+\gamma)+\sqrt{\Delta}}{2}>0.$$

Obviously, we have $\sigma_2>\sigma_1$. Since $\mu_F<\mu_M$ and $d_F\geq d_M$, we have
$$
\sigma_2-\sigma_3=\frac{\sqrt{\Delta}-(\mu_{A,1}+\gamma+\mu_F-\mu^2d_F)}{2} + \left(\mu_M - \mu^2d_M\right) \geq \frac{\sqrt{\Delta}-(\mu_{A,1}+\gamma-(\mu_F-\mu^2d_F))}{2},
$$
that is 
$$
\begin{array}{lcl}
\sigma_2-\sigma_3 \geq \dfrac{\Delta-(\mu_{A,1}+\gamma-(\mu_F-\mu^2d_F))^2}{2(\sqrt{\Delta}+(\mu_{A,1}+\gamma+\mu_F-\mu^2d_F)}&=&\dfrac{4\mu_F(\mu_{A,1}+\gamma)\rr}{2(\sqrt{\Delta}+(\mu_{A,1}+\gamma+\mu_F-\mu^2d_F)}\\
&=&\tcm{\dfrac{\mu_F(\mu_{A,1}+\gamma)\rr}{-\sigma_1}}>0,
\end{array}
$$
such that $\sigma_2>\max(\sigma_1,\sigma_3)$. 

Now we compute an eigenvector
$v_0=(x,y,z)'$ of $\omega(\mu)$ associated to $\sigma_2$. We need to
solve the algebraic equations
$$
\left\{
\begin{array}{rcc}
  -(\mu_{A,1}+\gamma)x +R(\mu_{A,1}+\gamma)z& = & \sigma_2x, \\
  \mu_Mx-\mu_My+\mu^2d_My & = & \sigma_2y, \\
  \mu_Fx+(\mu^2d_F-\mu_F)z & = & \sigma_2z.
\end{array}
\right.
$$
From the last two equations, since $\sigma_2>\mu^2d_F-\mu_F$ and $\sigma_2>\sigma_3$, we obtain
$$
\left\{
\begin{array}{ccl}
  y & = & \dfrac{\mu_M}{\sigma_2+\mu_M-\mu^2d_M}x=\dfrac{\mu_M}{\sigma_2-\sigma_3}x, \\
  z & = & \displaystyle\frac{\mu_F}{\sigma_2+\mu_F-\mu^2d_F}x.
\end{array}
\right.
$$
Substituting into the first equation leads to
$$
\begin{array}{cl}
   &  -(\mu_{A,1}+\gamma)x+\rr(\gamma+\mu)\displaystyle\frac{\mu_F}{\sigma_2+\mu_F-\mu^2d_F}x=\sigma_2x, \\
\Leftrightarrow   & (-(\mu_{A,1}+\gamma)(\sigma_2+\mu_F-\mu^2d_F+\rr(\mu_{A,1}+\gamma)\mu_F))x=(\sigma_2+\mu_F-\mu^2d_F)\sigma_2x \\
\Leftrightarrow & (\sigma_2^2+\sigma_2(-\mu^2d_F+\mu_F+\mu_{A,1}+\gamma)+\mu_F(\mu_{A,1}+\gamma)(1-\rr)-(\mu_{A,1}+\gamma)\mu^2d_F)x=0.
\end{array}
$$
Since $\sigma_2$ is a positive root of \eqref{rootsigma}, we deduce that $x \in \mathbb{R}^*$.
Therefore, setting $x=1$, we deduce
$$v_0=\left(1,\displaystyle\frac{\mu_M}{\sigma_2+\mu_M-\mu^2d_M},\displaystyle\frac{\mu_F}{\sigma_2+\mu_F-\mu^2d_F}\right)'>\textbf{0}.
$$
Then, we set
\begin{equation}\label{vecteur-propre-monostable}
v(\mu)=\displaystyle\frac{1}{\|v_0\|}v_0
\end{equation}
 such that $\|v(\mu)\|=1$. Thus (\textbf{K'})$_3$ holds.
 
\noindent Let us consider, for $\mu>0$, $$\Phi(\mu)=\sigma_2(\mu)/\mu>0.$$
Since $\Phi(\mu)>\mu d_F-\displaystyle\frac{\mu_F}{\mu}$, it implies
that $\lim\limits_{\mu\rightarrow+\infty}\Phi(\mu)=+\infty$.
Similarly, since $\lim\limits_{\mu\rightarrow0}\sigma_2(\mu)>0$, we
also deduce that $\lim\limits_{\mu\rightarrow0^+}\Phi(\mu)=+\infty$.
In addition, $$\Phi'(\mu)=0\Leftrightarrow
\psi(\mu):=\mu\sigma'_2(\mu)-\sigma_2(\mu)=0.$$ Direct computations
lead that
$$\psi(\mu)=\displaystyle\frac{\mu^2d_F+\mu_F+\mu_{A,1}+\gamma}{2}+\displaystyle\frac{\mu^2d_F(\mu^2d_F-\mu_F+\mu_{A,1}+\gamma)}{\sqrt{\Delta}}-\displaystyle\frac{\Delta}{2\sqrt{\Delta}}.$$

Thus, $\psi(\mu)=0$ is equivalent to
$$(\mu^2d_F+\mu_F+\mu_{A,1}+\gamma)^2\Delta-(2\mu^2d_F(\mu^2d_F-\mu_F+\mu_{A,1}+\gamma)-\Delta)^2=0.$$
Let us set:
$$
\begin{array}{cl}
  x & =\mu^2d_F, \\
  a_1 &= \mu_{A,1}+\gamma, \\
  a_2 & =\mu_F+a_1, \\
  a_3 &=\mu_F(\rr-1),
\end{array}
$$
such that $\Delta=(a_2-x)^2+4a_1(a_3+x)$.
Then, solving $\psi(\mu)=0$ is equivalent to
$$(x+a_2)^2((a_2-x)^2+4a_1(x+a_3))-(2x(x-a_2+2a_1)-((a_2-x)^2+4a_1(x+a_3)))^2=0,$$
$$((x+a_2)(a_2-x))^2-(2x(x+2a_1)-a_2^2-x^2+4a_1(x+a_3)))^2+4a_1(x+a_2)^2(x+a_3)=0,$$
$$((x+a_2)(a_2-x))^2-(x^2-a_2^2-4a_1a_3))^2+4a_1(x+a_2)^2(x+a_3)=0,$$
$$(a_2^2-x^2)^2 -(x^2-a_2^2-4a_1a_3))^2+4a_1(x+a_2)^2(x+a_3)=0,$$
$$-4a_1a_3(2a_2^2-2x^2+4a_1a_3)+4a_1(x+a_2)^2(x+a_3)=0.$$
Simplifying by $4a_1$ and expanding the previous terms lead to
\begin{equation}
x^3+(3a_3+2a_2)x^2+(2a_2a_3+a_2^2)x-(a_3a_2^2+4a_1a_3^2)=0.
\label{poly-vitesse}
\end{equation}
All coefficients are positive except the last one, such that $\psi(\mu)=0$ has a unique positive root which ensures that $\Phi'$ changes sign once on $(0, +\infty)$.
Thus, taking into account the computations on the limits of $\Phi$ and the
continuity of $\Phi$, we deduce that there exists a unique
$\overline{\mu}>0$ such that
$\Phi(\overline{\mu})=\inf\limits_{\mu>0}\Phi(\mu)>0$, the so-called minimal speed
\begin{equation}\label{c-bar}
    \overline{c}:=\dfrac{\sigma_2(\overline{\mu})}{\overline{\mu}}.
\end{equation}

Let $\mu\in(0,\bar{\mu}]$ and $\rho>0$.

\begin{equation}\label{inequality-mu}
\begin{array}{cll}
h_0(\rho v(\mu)) &=&\left(
                     \begin{array}{c}
                       (\mu_{A,1}+\gamma)(\rr\rho v_3-\rho v_1-(\rr-1)(\rho v_1)^2) \\
                       \rho\mu_M(v_1-v_2) \\
                       \rho\mu_F(v_1-v_3) \\
                     \end{array}
                   \right)\\
                   &\leq& \rho\left(
                     \begin{array}{c}
                       (\mu_{A,1}+\gamma)(\rr v_3- v_1) \\
                       \mu_M(v_1-v_2) \\
                       \mu_F(v_1-v_3) \\
                     \end{array}
                   \right)\\
                   &=&\rho h_0'(\textbf{0})v(\mu)
\end{array}
\end{equation}

where $v(\mu)=(v_1(\mu),v_2(\mu),v_3(\mu))'$ is defined in
(\ref{vecteur-propre-monostable}). Therefore, since (\textbf{H}') and (\textbf{K}') hold true, we
can apply Lemma \ref{lemme-Fang-Zhao}, page \pageref{lemme-Fang-Zhao}, and Theorem \ref{theorem-Fang-Zhao-monostable}, page \pageref{theorem-Fang-Zhao-monostable} to system (\ref{compact-STI-pde-normalise}), to deduce that the spreading speed $c^*$ coincides with the minimal wave speed,  $\overline{c}$, defined in (\ref{c-bar}), and the following result for system (\ref{SIT-pde})

%

\begin{theorem}\label{theorem-PDE-monostable}
For each $c\geq c^*=\overline{c}$, system
(\ref{SIT-pde}) has a nondecreasing wavefront $U(x
+ ct)$ connecting $E_0$ and $E^*$; while for
any $c \in (0, c^*)$, there is no wavefront $U(x + ct)$ connecting
$E_0$ and $E^*$.
\end{theorem}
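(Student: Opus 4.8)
The plan is to transfer the abstract monostable wavefront result (Theorem \ref{theorem-Fang-Zhao-monostable}, together with Lemma \ref{lemme-Fang-Zhao}) from the normalized system (\ref{compact-STI-pde-normalise})-(\ref{definition-compact-STI-pde}) back to the original system (\ref{SIT-pde}) with $M_T=0$. Since the change of variables $a=A/A^*$, $m=M/M^*$, $f=F/F^*$ is a linear, invertible, order-preserving rescaling, a nondecreasing wavefront $u(x+ct)$ of the normalized system connecting $\mathbf{0}=e_0$ and $\mathbf{1}=e^*$ corresponds exactly to a nondecreasing wavefront $U(x+ct)=(A^*a,M^*m,F^*f)'$ of (\ref{SIT-pde}) connecting $E_0=\mathbf{0}$ and $E^*=(A^*,M^*,F^*)'$, and conversely. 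So the whole statement reduces to verifying that the normalized nonlinearity $h_0$ satisfies hypotheses (\textbf{H'}) and (\textbf{K'}), and then invoking the two transferred results.

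First I would collect the verifications that the preceding computations already supply: (\textbf{H'})$_1$ and (\textbf{H'})$_2$/(\textbf{K'})$_1$--(\textbf{K'})$_2$ hold because the normalized system is cooperative and, by the analysis of Section \ref{preliminay} recast in normalized variables, $h_0(u)=\mathbf{0}$ has exactly the two roots $e_0=\mathbf{0}$ and $e^*=\mathbf{1}$ in $[\mathbf{0},\mathbf{1}]$; (\textbf{K'})$_4$ holds with $\sigma=2$ and the explicit bound $h_0(u)\geq h_0'(\mathbf{0})u-(\rr-1)(\mu_{A,1}+\gamma)\|u\|^2\mathbf{1}$ because $h_0$ is quadratic; (\textbf{H'})$_3$/(\textbf{K'})$_3$ holds because, for every $\mu>0$, the matrix $\omega(\mu)=\mu^2D+h_0'(\mathbf{0})$ has been shown to have $\sigma_2(\mu)=s(\omega(\mu))>0$ as a simple eigenvalue strictly larger than the other two eigenvalues $\sigma_1<0$ and $\sigma_3=-\mu_M+\mu^2 d_M$ (this is exactly where assumption (\ref{tech-assump}), namely $\mu_F<\mu_M$, together with $d_F\geq d_M$, is used, to get $\sigma_2>\sigma_3$), with associated strongly positive eigenvector $v(\mu)$ normalized to $\|v(\mu)\|=1$; and (\textbf{K'})$_5$ holds by the chain of inequalities in (\ref{inequality-mu}), valid for all $\mu\in(0,\overline{\mu}]$ and $\rho>0$, which uses only that $0\leq v_i(\mu)\leq 1$ and $\rr-1>0$. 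I would also note that the same inequality (\ref{inequality-mu}), specialized to $\mu=\overline{\mu}$, gives the hypothesis of item $(iii)$ of Lemma \ref{lemme-Fang-Zhao}, so that $c^*=\overline{c}$.

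With all hypotheses in hand, the argument is then: apply Lemma \ref{lemme-Fang-Zhao}(iii) to conclude $c^*=\overline{c}$ with $\overline{c}=\sigma_2(\overline{\mu})/\overline{\mu}$ as in (\ref{c-bar}) (here one must also record that $\overline{\mu}$ exists and is unique, which follows from the limit computations $\lim_{\mu\to 0^+}\Phi(\mu)=\lim_{\mu\to+\infty}\Phi(\mu)=+\infty$ together with the fact, established via the cubic (\ref{poly-vitesse}) having a single positive root, that $\Phi'$ changes sign exactly once on $(0,+\infty)$); then apply Theorem \ref{theorem-Fang-Zhao-monostable} to the normalized system to get, for each $c\geq c^*$, a nondecreasing wavefront connecting $\mathbf{0}$ and $\mathbf{1}$ and, for $c\in(0,c^*)$, the nonexistence of such a wavefront; finally undo the rescaling to obtain the statement for (\ref{SIT-pde}) with the equilibria $E_0$ and $E^*$.

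The only genuinely delicate point — and the one I would treat as the main obstacle — is the eigenvalue/eigenvector structure of $\omega(\mu)$ required by (\textbf{H'})$_3$ and (\textbf{K'})$_3$: one must be sure that $s(\omega(\mu))$ is a \emph{simple} eigenvalue with a \emph{strongly positive} eigenvector, and that it is strictly dominant, for \emph{all} $\mu>0$, not just generically. Since $\omega(\mu)$ is only essentially nonnegative (cooperative) but not irreducible in general, this is not immediate from Perron--Frobenius alone; it is precisely here that the structural inequalities $\sigma_2>\sigma_1$ (automatic from $\Delta>0$) and $\sigma_2>\sigma_3$ (requiring $\mu_F<\mu_M$ and $d_F\geq d_M$) must be used, after which simplicity and strong positivity of the eigenvector follow from the explicit formula for $v_0$ computed above. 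Everything else is routine bookkeeping — checking signs of coefficients, the quadratic/cubic discriminants, and the limits of $\Phi$ — which the preceding pages already carry out.
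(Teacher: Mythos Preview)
Your proposal is correct and follows essentially the same route as the paper: verify assumptions (\textbf{H'}) and (\textbf{K'}) for the normalized system (\ref{compact-STI-pde-normalise}), invoke Lemma \ref{lemme-Fang-Zhao}(iii) to get $c^*=\overline{c}$ and Theorem \ref{theorem-Fang-Zhao-monostable} for existence/nonexistence, then undo the linear rescaling. You also correctly single out the only nontrivial step, namely the strict dominance and simplicity of $\sigma_2(\mu)$ with strongly positive eigenvector for all $\mu>0$, and the parametric conditions ($\mu_F<\mu_M$, $d_F\geq d_M$) that secure $\sigma_2>\sigma_3$.
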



\subsubsection{Existence of bistable traveling waves for system (\ref{SIT-pde})}

To show existence of bistable traveling wave solutions of system
(\ref{RD-Fang}), Fang and Zhao 
\cite{Fang2009} considered the following assumptions:\\
\emph{(\textbf{L})} Assume that
$f=(f_1,...,f_n)'\in\mathcal{C}^1(\mathbb{R}^n, \mathbb{R}^n)$
satisfies the following assumptions:
\begin{enumerate}
    \item $f(\textbf{0})=f(\textbf{1})=f(\alpha)=\textbf{0}$ with
    \textbf{0}$\ll\alpha\ll$\textbf{1}. There is no $\nu$ other than \textbf{0}, \textbf{1} and $\alpha$ such
that $f(\nu) =$ \textbf{0} with \textbf{0} $\leq\nu\leq$ \textbf{1}.
    \item System (\ref{RD-Fang}) is cooperative.
    \item $u\equiv\textbf{0}$ and $u\equiv\textbf{1}$ are stable,
    and $u\equiv\alpha$ is unstable, that is,
    $$\lambda_0:=s(f'(\textbf{0}))<0, \quad \lambda_1:=s(f'(\textbf{1}))<0, \quad \lambda_\alpha:=s(f'(\alpha))>0.$$
    \item $f'(\textbf{0})$, $f'(\textbf{1})$ and $f'(\alpha)$ are
    irreducible.
\end{enumerate}
Then, they showed the following result


\begin{theorem}\label{Theorem-Fang-Zhao}(
\cite[Theorem 4.1]{Fang2009} - bistable TW)\\Assume that (\textbf{L}) holds. Then
system (\ref{RD-Fang}) admits a monotone wavefront $(U,c)$ with
$U(-\infty)=\textbf{0}$ and $U(+\infty)=\textbf{1}.$
\end{theorem}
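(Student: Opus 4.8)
The plan is to follow the scheme of \cite[Theorem 4.1]{Fang2009}: remove the degeneracy by a vanishing-viscosity argument, solve the non-degenerate bistable problem with classical tools, and then pass to the limit. For $\varepsilon>0$ replace $D$ by $D_\varepsilon:=D+\varepsilon I$ in (\ref{RD-Fang}); the system stays cooperative but becomes uniformly parabolic, while the equilibria $\textbf{0},\alpha,\textbf{1}$ and the Jacobians $f'(\textbf{0}),f'(\alpha),f'(\textbf{1})$ are untouched, so hypotheses (\textbf{L}) continue to hold. In particular the bistable stability pattern of (\textbf{L})$_3$ survives: linearizing about a homogeneous state $e$ on $\R$ gives the family $-\mu^2 D_\varepsilon+f'(e)$ of matrices with non-negative off-diagonal entries, which are $\leq f'(e)$ entrywise, hence $s(-\mu^2 D_\varepsilon+f'(e))\leq s(f'(e))$ with equality at $\mu=0$, so the PDE (in)stability of $\textbf{0},\textbf{1},\alpha$ is still governed by $\lambda_0,\lambda_1,\lambda_\alpha$. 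For the non-degenerate problem one then invokes the classical existence result for monotone bistable wavefronts of uniformly parabolic cooperative systems (via ordered sub/supersolutions, continuation in the speed and the comparison principle, the irreducibility hypothesis (\textbf{L})$_4$ supplying the spectral simplicity needed at the endpoints; see the references in \cite{Fang2009}), obtaining for every $\varepsilon>0$ a monotone wavefront $(U_\varepsilon,c_\varepsilon)$ with $U_\varepsilon(-\infty)=\textbf{0}$, $U_\varepsilon(+\infty)=\textbf{1}$, normalized so that a fixed component of $U_\varepsilon$ equals the corresponding component of $\alpha$ at $z=0$.

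Next I would let $\varepsilon\to 0$. The first task is a uniform-in-$\varepsilon$ bound on $c_\varepsilon$: an upper bound follows by comparing $U_\varepsilon$ near $\textbf{0}$ and near $\textbf{1}$ with supersolutions built from the linearizations there, and a lower bound is obtained in the same way after the reflection $v=\textbf{1}-u$, $z\mapsto -z$ (which is again a cooperative system satisfying (\textbf{L}), with intermediate state $\textbf{1}-\alpha$ and wave speed $-c_\varepsilon$); hence $\{c_\varepsilon\}$ is precompact, $c_{\varepsilon_k}\to c$. Since each $U_\varepsilon$ is monotone with values in $[\textbf{0},\textbf{1}]$, Helly's selection theorem yields pointwise convergence of a further subsequence to a monotone limit $U$, and interior parabolic/elliptic estimates upgrade this to local uniform convergence on the non-degenerate components. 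Passing to the limit in the profile equations $D_\varepsilon U_\varepsilon''-c_\varepsilon U_\varepsilon'+f(U_\varepsilon)=\textbf{0}$ then gives $DU''-cU'+f(U)=\textbf{0}$, the passage being classical for the diffusive components and distributional for a degenerate component $i$, where $\varepsilon U_{\varepsilon,i}''-c_\varepsilon U_{\varepsilon,i}'+f_i(U_\varepsilon)=0$ goes over to $-cU_i'+f_i(U)=0$.

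The delicate points — and the main obstacle — are twofold. First, the singular limit for the degenerate equations: one must show $\varepsilon U_{\varepsilon,i}''\to 0$ appropriately, which when $c\neq 0$ comes from reading $\varepsilon\,(U_{\varepsilon,i}')'=c_\varepsilon U_{\varepsilon,i}'-f_i(U_\varepsilon)$ as a first-order ODE for $U_{\varepsilon,i}'$ together with the uniformly bounded total variation of $U_{\varepsilon,i}$, but which needs separate care in the standing-wave case $c=0$, where the limit constraint $f_i(U)\equiv 0$ must be shown compatible with a non-constant monotone profile. Second, one must rule out that $U$ connects to the intermediate equilibrium $\alpha$ rather than to $\textbf{0}$ and $\textbf{1}$: since $\lambda_\alpha=s(f'(\alpha))>0$, a solution of the limit system tending to $\alpha$ at $+\infty$ (or $-\infty$) would, by the linearization in the unstable direction, be forced to leave a neighbourhood of $\alpha$, contradicting monotonicity and boundedness; with the normalization this forces $U(\pm\infty)$ to be zeros of $f$ in $[\textbf{0},\textbf{1}]$ distinct from $\alpha$, hence $\textbf{0}$ and $\textbf{1}$ by (\textbf{L})$_1$, yielding the desired monotone bistable wavefront $(U,c)$. (An alternative route, the one actually taken in \cite{Fang2009}, avoids vanishing viscosity: work directly with the time-$t$ solution map of (\ref{RD-Fang}), a monotone semiflow with bistable order structure, and apply an abstract existence theorem for bistable travelling waves of monotone semiflows enjoying only \emph{weak} compactness — precisely what is available here, since the degenerate components provide no smoothing.)
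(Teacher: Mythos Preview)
The paper does not give its own proof of this theorem: it is stated purely as a citation of \cite[Theorem~4.1]{Fang2009}, with no argument supplied. (The paper's only proof-related remark in this direction is for the next theorem, Theorem~\ref{Theorem-Fang-Zhao-new}, where it says ``The proof is exactly the same as the proof of Theorem 4.1 in Fang and Zhao \cite{Fang2009}.'') So there is nothing in the paper to compare your sketch against beyond the reference itself.

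That said, a remark on your outline: the vanishing-viscosity scheme you describe --- perturb $D$ to $D_\varepsilon=D+\varepsilon I$, invoke the non-degenerate bistable theory to get $(U_\varepsilon,c_\varepsilon)$, bound the speeds, extract a monotone limit by Helly, and pass to the limit in the profile equations while excluding convergence to $\alpha$ --- \emph{is} essentially the approach of \cite[Theorem~4.1]{Fang2009}. Your closing parenthetical is therefore inverted: Fang and Zhao's 2009 paper does use vanishing viscosity for the bistable case; the abstract ``bistable monotone semiflow with weak compactness'' framework you allude to is their later 2015 work. Apart from that attribution slip, your sketch is a faithful summary of the proof structure, including the correct identification of the two genuinely delicate points (the limit in the degenerate components, especially when $c=0$, and ruling out $\alpha$ as a limit state).
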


Practically, in assumption \emph{(\textbf{L})}, the last item concerning the irreducibility of matrices $f'(\textbf{0})$,
$f'(\textbf{1})$ and $f'(\alpha)$, is quite restrictive for application. We found that we can weaker this assumption as follows:\\

\noindent \emph{(\textbf{L'})}: assume that $f\in\mathcal{C}^1(\mathbb{R}^n,
\mathbb{R}^n)$ satisfies assumptions (\textbf{L})$_1$, (\textbf{L})$_2$, (\textbf{L})$_3$ and 
\begin{enumerate}
    \item[(\textbf{L}')$_4$: ] There exists an eigenvector $e_0\gg\textbf{0}$ with $||e_0||_{\mathbb{R}^n}=1$ corresponding to
    $\lambda_0$, $f'(\textbf{1})$ and $f'(\alpha)$ are
    irreducible.
\end{enumerate}
\noindent or

\noindent \emph{(\textbf{L''})}: assume that $f\in\mathcal{C}^1(\mathbb{R}^n,\mathbb{R}^n)$ satisfies assumptions (\textbf{L})$_1$, (\textbf{L})$_2$, (\textbf{L})$_3$ and 
\begin{enumerate}
    \item[(\textbf{L}'')$_4$:] There exist eigenvectors $e_0\gg\textbf{0}$ and $e_1\gg\textbf{0}$ with $||e_0||_{\mathbb{R}^n}=||e_1||_{\mathbb{R}^n}=1$ corresponding to
    $\lambda_0$ and $\lambda_1$ respectively,  and $f'(\alpha)$ is
    irreducible.
\end{enumerate}
We obtain the following result
\begin{theorem}\label{Theorem-Fang-Zhao-new}Assume (\textbf{L'}) or (\textbf{L''}) holds. Then system
(\ref{RD-Fang}) admits a monotone wavefront $(U,c)$ with
$U(-\infty)=\textbf{0}$ and $U(+\infty)=\textbf{1}.$
\end{theorem}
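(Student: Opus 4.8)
The plan is to derive Theorem~\ref{Theorem-Fang-Zhao-new} from Theorem~\ref{Theorem-Fang-Zhao} by re-examining the proof of \cite[Theorem 4.1]{Fang2009} and isolating exactly where the irreducibility of $f'(\textbf{0})$, $f'(\textbf{1})$ and $f'(\alpha)$ is used. As in the weakening of assumptions \emph{(\textbf{H})} and \emph{(\textbf{K})} to \emph{(\textbf{H'})} and \emph{(\textbf{K'})} earlier in the paper, the key observation is that irreducibility of a Jacobian at an equilibrium is never needed per se: it is invoked only through its consequence via the Perron--Frobenius theorem (see \cite[Chapter 4, Section 3]{Smith1995}), namely that the stability modulus $s(f'(p))$ is a simple eigenvalue admitting a strongly positive eigenvector. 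For a cooperative system this positive-eigenvector property is precisely what \emph{(\textbf{L'})}$_4$ (resp.\ \emph{(\textbf{L''})}$_4$) postulates directly at $\textbf{0}$ (resp.\ at $\textbf{0}$ and $\textbf{1}$), so the hypotheses we assume are exactly the conclusions that Fang--Zhao's argument extracts from irreducibility.

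First I would recall the structure of the bistable argument in \cite{Fang2009}: the wavefront connecting $\textbf{0}$ and $\textbf{1}$ is obtained by gluing together two monostable-type sub-solutions/super-solutions, one on the ``lower'' interval $[\textbf{0},\alpha]$ (where $\alpha$ is unstable and $\textbf{0}$ stable) and one on the ``upper'' interval $[\alpha,\textbf{1}]$, and then using a homotopy/vanishing-viscosity plus compactness scheme to produce a single monotone front with a single speed $c$. Within this scheme, irreducibility of $f'(\alpha)$ is used to guarantee that the spreading speeds from $\alpha$ into the two stable states are well defined and that the intermediate equilibrium is genuinely a ``pushed'' obstacle — this is why we keep $f'(\alpha)$ irreducible in both \emph{(\textbf{L'})} and \emph{(\textbf{L''})}. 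Irreducibility of $f'(\textbf{0})$ and $f'(\textbf{1})$, on the other hand, enters only when constructing the exponentially decaying tails of the front near $z=-\infty$ and $z=+\infty$: one needs a strictly positive eigenvector of the linearization associated with $s(f'(\textbf{0}))<0$ (resp.\ $s(f'(\textbf{1}))<0$) in order to build strongly positive super-solutions that force the front to approach $\textbf{0}$ (resp.\ $\textbf{1}$) componentwise rather than collapsing onto a proper face of the order interval.

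Concretely, the key steps would be: (i) state, as a lemma, that for a cooperative $\mathcal{C}^1$ vector field $f$ and an equilibrium $p$ with $s(f'(p))<0$, the mere existence of a strongly positive eigenvector $e\gg\textbf{0}$ of $f'(p)$ (necessarily associated with the eigenvalue $s(f'(p))$, by the cooperativity and the Perron--Frobenius structure of $e^{tf'(p)}$) suffices to construct, for small $\varepsilon>0$, a family $p + \varepsilon e^{-\lambda z} e$ (with $\lambda>0$ chosen so that $-\lambda c$ matches the eigenvalue) that serves as a local super-solution near the appropriate end; (ii) substitute this into the two places in Fang--Zhao's proof where irreducibility at $\textbf{0}$ (for \emph{(\textbf{L'})}) or at $\textbf{0}$ and $\textbf{1}$ (for \emph{(\textbf{L''})}) was invoked, checking that every other use of irreducibility in their argument is at $\alpha$, which we retain; (iii) verify that the remaining machinery — monotone iteration, a priori bounds, the limiting argument for the speed — is unaffected, since it relies only on cooperativity, the ordering $\textbf{0}\ll\alpha\ll\textbf{1}$, and the stability signs in \emph{(\textbf{L})}$_3$. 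I would then conclude that the monotone wavefront $(U,c)$ with $U(-\infty)=\textbf{0}$, $U(+\infty)=\textbf{1}$ exists under \emph{(\textbf{L'})} or \emph{(\textbf{L''})}, and note that \emph{(\textbf{L})} $\Rightarrow$ \emph{(\textbf{L''})} $\Rightarrow$ \emph{(\textbf{L'})} so nothing is lost.

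The main obstacle I anticipate is the bookkeeping of step (ii): one must be certain that \emph{every} appeal to irreducibility of $f'(\textbf{0})$ or $f'(\textbf{1})$ in \cite{Fang2009} can be replaced cleanly. In particular, beyond the tail construction, irreducibility of $f'(\textbf{0})$ is sometimes used to ensure that the principal eigenvalue is \emph{simple}, which matters for the uniqueness/characterization of the decay rate and hence for the uniqueness of the speed; if Fang--Zhao's proof genuinely needs simplicity at $\textbf{0}$ or $\textbf{1}$ (not just a positive eigenvector), then the honest statement is that \emph{(\textbf{L'})}$_4$/\emph{(\textbf{L''})}$_4$ should additionally require that the corresponding eigenvalue be simple — which for a cooperative system follows automatically from the existence of a strongly positive eigenvector together with $s(f'(p))$ being the spectral abscissa, so the gap is, I expect, only apparent. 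The secondary subtlety is purely expository: making sure $e_0$ and $e_1$ as introduced in \emph{(\textbf{L'})}/\emph{(\textbf{L''})} are indeed the Perron eigenvectors associated with $\lambda_0$ and $\lambda_1$ and not eigenvectors for some other eigenvalue, which is guaranteed because a cooperative matrix has at most one eigenvalue with a strongly positive eigenvector and it is the spectral abscissa.
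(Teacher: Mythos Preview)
Your proposal is correct and takes essentially the same approach as the paper, which simply states that ``the proof is exactly the same as the proof of Theorem 4.1 in Fang and Zhao \cite{Fang2009}.'' You have fleshed out in considerable detail what the paper leaves as a one-line deferral: that irreducibility at $\textbf{0}$ (and $\textbf{1}$) is invoked only through its Perron--Frobenius consequence of a strongly positive eigenvector for the stability modulus, which \emph{(\textbf{L'})}$_4$/\emph{(\textbf{L''})}$_4$ supply directly.
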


\begin{proof}The proof is exactly the same as the proof of Theorem
4.1 in Fang and Zhao \cite{Fang2009}.
\end{proof}

Coming back to the SIT PDE model (\ref{compact-STI-pde}), with $0<M_T<M_{T_1}$, the
Jacobian matrix at the extinction equilibrium $E_0=\textbf{0}$ is
$$H'(E_0)=\left(
           \begin{array}{ccc}
             -(\gamma+\mu_{A,1}) & 0 & \phi \\
             (1-r)\gamma & -\mu_M & 0 \\
             0 & 0 & -\mu_F \\
           \end{array}
         \right).
$$
From (\ref{tech-assump}), we infer
$$\lambda_0=s(H'(E_0))=\max\{-(\gamma+\mu_{A,1}),-\mu_M,-\mu_F\}=-\mu_F.$$
Let $u=(x,y,z)'$ be an eigenvector of $H'(E_0)$ that correspond to
$\lambda_0$, that is

$$
\left\{\begin{array}{rcl}
  -(\gamma+\mu_{A,1})x+\phi z & = & \lambda_0x, \\
  (1-r)\gamma x -\mu_My & = & \lambda_0y, \\
  z & \in & \mathbb{R}.
\end{array}\right.
$$
From assumption (\ref{tech-assump}), that
$x=\displaystyle\frac{\phi}{\gamma+\mu_{A,1}-\mu_F}z$ and
$y=\displaystyle\frac{\phi(1-r)\gamma}{(\gamma+\mu_{A,1}-\mu_F)(\mu_M-\mu_F)}z$.
Therefore one can choose
$$u=(x,y,z)'=\left(\displaystyle\frac{\phi}{\gamma+\mu_{A,1}-\mu_F},\displaystyle\frac{\phi(1-r)\gamma}{(\gamma+\mu_{A,1}-\mu_F)(\mu_M-\mu_F)},1\right)'\gg\textbf{0}.$$
To be in line with the last point of assumption
\emph{(\textbf{L'})}, we set
$e_0=\displaystyle\frac{1}{||u||_{\mathbb{R}^3}}u\gg\textbf{0}$ so
that
$||e_0||_{\mathbb{R}^3}=1$. \\
In addition, let $(A,M,F)'$ be an homogeneous equilibrium of
(\ref{compact-STI-pde}); that is, $H(A,M,F)=0_{\mathbb{R}^3}$. The
Jacobian matrix at $(A,M,F)'$ is

$$H'(A,M,F)=\left(
           \begin{array}{ccc}
             -(\gamma+\mu_{A,1})-2\mu_{A,2}A & 0 & \phi \\
             (1-r)\gamma & -\mu_M & 0 \\
             \gamma r\displaystyle\frac{M}{M+M_T}  & r\gamma A\displaystyle\frac{M_T}{(M+M_T)^2} & -\mu_F \\
           \end{array}
         \right).
$$
Since $A>0$, $F>0$ and $M>0$ one deduces that $H'(A,M,F)$ is an irreducible matrix. Thus, (\textbf{L'})$_4$ holds true. 

In addition, since $0<M_T<M_{T_1}$, from Theorem \ref{SIT-ode-theorem}, page
\pageref{SIT-ode-theorem}, one deduces that the first and
third requirement of assumption \emph{(\textbf{L'})} are fulfilled
with $\alpha\equiv E_1$ and \textbf{\emph{1}}$\equiv E_2$. Finally,
system (\ref{compact-STI-pde}) is clearly monotone cooperative \cite{Smith1995}. Consequently,
the whole assumptions in \emph{(\textbf{L'})} are verified and the
following result is derived from Theorem \ref{Theorem-Fang-Zhao}, page \pageref{Theorem-Fang-Zhao}:

\begin{theorem}\label{Theorem-SIT-PDE-bistable-TW} Assuming that assumptions (\textbf{L'}) holds for
system (\ref{compact-STI-pde}) with $E_0=$\textbf{\emph{0}},
$\alpha\equiv E_1$, and \textbf{\emph{1}}$\equiv E_2$, then
system (\ref{compact-STI-pde}) admits a monotone wavefront $(U,c)$
with $U(-\infty)=$\textbf{0} and $U(+\infty)=E_2.$
\end{theorem}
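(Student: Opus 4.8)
The plan is to obtain the statement as a direct application of the weakened bistable traveling‑wave theorem, Theorem \ref{Theorem-Fang-Zhao-new}, after putting (\ref{compact-STI-pde}) into the normalized form for which that theorem (and assumption (\textbf{L'})) is phrased and after checking each clause of (\textbf{L'}) for the resulting nonlinearity — most of which has in fact already been done in the discussion preceding the statement. Since Theorem \ref{Theorem-Fang-Zhao-new} is stated for a system whose three ordered homogeneous equilibria are $\mathbf{0}\ll\alpha\ll\mathbf{1}$, I would first perform the diagonal, positive rescaling $a=A/A_2$, $m=M/M_2$, $f=F/F_2$. This affine change of variables sends $E_0=\mathbf{0}$ to $\mathbf{0}$ and $E_2$ to $\mathbf{1}$, and sends $E_1$ to a vector $\alpha$ that is strongly positive and satisfies $\alpha\ll\mathbf{1}$: indeed, by (\ref{Equi_1and2}), $M_1<M_2$ forces $A_1<A_2$ and then $F_1<F_2$. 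Such a rescaling preserves cooperativity, keeps the diffusion matrix diagonal with exactly its first entry zero, and carries each Jacobian to a similar matrix, hence preserves irreducibility, stability moduli, and the strong positivity of eigenvectors; it is therefore harmless for verifying (\textbf{L'}).

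Next I would verify (\textbf{L'}) for the rescaled reaction term $\widetilde H_{M_T}$. Clauses (\textbf{L})$_1$ and (\textbf{L})$_3$ are exactly the content of Theorem \ref{SIT-ode-theorem}(3) together with the uniqueness of the homogeneous equilibria recalled in Section \ref{preliminay}: for $0<M_T<M_{T_1}$ the only equilibria in the relevant order interval are $\mathbf{0}$, $E_1$, $E_2$; moreover $E_2$ is asymptotically stable and $E_1$ is a threshold (unstable) equilibrium in the cooperative ODE, so $s(H'(E_2))<0$, $s(H'(E_1))>0$, while $s(H'(E_0))=-\mu_F<0$ by (\ref{tech-assump}). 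Clause (\textbf{L})$_2$ is the (already noted) monotone–cooperative structure of (\ref{compact-STI-pde}). For (\textbf{L}')$_4$ I would invoke the two facts established just above the statement: (i) $H'(E_0)$ admits the strongly positive eigenvector $e_0$ (the normalization of the vector $u$ computed there) associated with $\lambda_0=-\mu_F$; and (ii) $H'(E_1)$ and $H'(E_2)$, being Jacobians at strongly positive homogeneous equilibria, are irreducible, since the off‑diagonal coupling entries $(1-r)\gamma$, $\phi$, $\gamma r\,M/(M+M_T)$ and $r\gamma A\,M_T/(M+M_T)^2$ are all strictly positive there. This is precisely (\textbf{L}')$_4$.

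Finally, since $M_T>0$ keeps the denominator $M+M_T$ bounded away from zero on a neighborhood of the order interval $[\mathbf{0},\mathbf{1}]$, $\widetilde H_{M_T}$ is $\mathcal{C}^1$ there (and can be extended smoothly if a global $\mathcal{C}^1$ hypothesis is wanted), so Theorem \ref{Theorem-Fang-Zhao-new} applies to the rescaled system and produces a monotone wavefront $(U,c)$ with $U(-\infty)=\mathbf{0}$ and $U(+\infty)=\mathbf{1}$; undoing the rescaling yields the claimed monotone wavefront connecting $\mathbf{0}$ to $E_2$ for (\ref{compact-STI-pde}). I do not expect a genuine obstacle — the argument is a verification feeding into Theorem \ref{Theorem-Fang-Zhao-new} — but the point deserving care is the deduction of the strict sign conditions $s(H'(E_1))>0>s(H'(E_2))$ from the basin‑of‑attraction statements of Theorem \ref{SIT-ode-theorem}: this uses that for a cooperative ODE a linearly nondegenerate asymptotically stable equilibrium has negative stability modulus while a non‑attracting intermediate equilibrium has positive stability modulus (Perron–Frobenius applied to the Jacobian, cf. \cite[Chapter 4]{Smith1995}), so one should note that the Jacobians at $E_1$ and $E_2$ are nondegenerate for $0<M_T<M_{T_1}$, the degenerate case occurring only at $M_T=M_{T_1}$, which is excluded.
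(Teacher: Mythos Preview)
Your proposal is correct and follows essentially the same route as the paper: the paper verifies (\textbf{L'}) in the discussion immediately preceding the statement (eigenvector $e_0$ at $E_0$, irreducibility of $H'$ at any strictly positive equilibrium, cooperativity, and the equilibrium and stability structure from Theorem~\ref{SIT-ode-theorem}) and then simply invokes Theorem~\ref{Theorem-Fang-Zhao-new}. You are somewhat more explicit than the paper about the diagonal rescaling sending $E_2$ to $\mathbf{1}$ and about why $s(H'(E_1))>0>s(H'(E_2))$ follows from Theorem~\ref{SIT-ode-theorem} via Perron--Frobenius, but these are elaborations of the same argument rather than a different approach.
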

\noindent Theorem \ref{Theorem-SIT-PDE-bistable-TW} holds true for system (\ref{SIT-pde}).

In the numerical simulation section, we will also discuss the case
where we take into account diffusion of sterile male mosquitoes.
That is, instead of model (\ref{SIT-pde}), page \pageref{SIT-pde},
we will consider system (\ref{SIT-pde-diffusion-MT}) with constant and non-constant continuous releases, that is
\begin{equation}\label{SIT-pde-diffusion-MT}
 \left\{%
\begin{array}{lcl}
 \displaystyle\frac{\partial A}{\partial t} &=& \phi F-(\gamma+\mu_{A,1}+\mu_{A,2}A)A, \quad (t,x)\in\mathbb{R}_+\times\mathbb{R}\\
    \displaystyle\frac{\partial M}{\partial t} &=& d_M\displaystyle\frac{\partial^2 M}{\partial x^2}+ (1-r)\gamma A-\mu_MM,\\
    \displaystyle\frac{\partial F}{\partial t} &=& d_F\displaystyle\frac{\partial^2 F}{\partial x^2}+\displaystyle\frac{ M}{M+M_T}r\gamma A-\mu_FF,\\
    \displaystyle\frac{\partial M_T}{\partial t} &=& d_T\displaystyle\frac{\partial^2 M_T}{\partial x^2}+ \Lambda(x,t)-\mu_TM_T,\\
\end{array}
\right.
\end{equation}

%

 where $\Lambda(x,t)$ is the number of sterile males released per
unit of time, $1/\mu_T$ is the average lifespan of sterile males. 
System (\ref{SIT-pde-diffusion-MT}) is considered with nonnegative
initial data. 
\section{Numerical simulations}\label{Numerical-simulations}
In this section we present some numerical simulations of system (\ref{compact-STI-pde})-(\ref{H-0}) to illustrate our analytical findings. Since we consider a one-dimensional model, the full discretization is simply obtained using a second-order finite difference method for the space discretization, and a first-order non-standard finite difference method for the temporal discretization, with the time-step following a CFL-condition, to preserve the positivity of the solution \cite{Anguelov2012,Dufourd2013}. 

 
Following \cite{Anguelov2019}, we consider parameter values for \textit{Aedes albopictus}, summarized in Table \ref{Table-valeurs-parametre}, page \pageref{Table-valeurs-parametre}.

\begin{table}[H]
  \centering
\begin{tabular}{|c|c|c|c|c|c|c|c|c|c|c|}
  \hline
  Symbol & $\phi$ &  $\mu_{A,1}$ & $\mu_{A,2}$ & $r$ & $\gamma$ & $\mu_F$ & $\mu_M$ & $\mu_T$ & $d_F$ & $d_M$\\
  \hline
  Value & 10 & 0.05 & 2$\times10^{-4}$ & 0.49 & 0.08 & 0.1 & 0.14 & 0.14 & 0.1 & 0.05\\
  \hline
\end{tabular}
 \caption{Entomological parameter values \cite{Anguelov2019,Dufourd2013} for \textit{Aedes albopictus}}\label{Table-valeurs-parametre}
\end{table}

However, whatever the biological example, the next simulations are mainly for discussions and illustrations, even if we try to highlight some results for potential application in the field.
In addition, using the numerical schemes, we will go further by extending to a spatial domain, a corridor for instance, the small-massive releases strategy developed and studied in \cite{Anguelov2019}.

For sake of clarity, in this section when we speak about constant
release $M_T$ reader should understand that the effective amount of
the sterile insect released is $\Lambda=M_T\times\mu_T$.
Here, thanks to the parameters values given in Table \ref{Table-valeurs-parametre}, we have $\mathcal{R}\approx 30>1$. Based on \eqref{MT1}, we can estimate $M_{T_{1}} \approx 3745$.

First, we provide some simulations without SIT control. Then, we present some simulations with SIT control, exploring different strategies that can be used to eliminate, slow down, block, or even reverse a pest/vector invasion.

\subsection{Without SIT control - $M_T=0$}
Here, we assume that there is no SIT control, i.e.
$M_T=0$. In Fig. \ref{mini_speed_gamma_dF}(a), page \pageref{mini_speed_gamma_dF}, we show the variations of the minimal wave speed $\overline{c}$, estimated using (\ref{c-bar}), page \pageref{c-bar}, according to the maturation rate from larvae to adult,
$\gamma$, and the female mosquitoes diffusion rate, $d_F$. According to the parameter values given in Table \ref{Table-valeurs-parametre}, page \pageref{Table-valeurs-parametre}, we consider $\gamma>\dfrac{\mu_{A,1}\mu_F}{r\phi-\mu_F}$, such that we always have $\mathcal{R}>1$. No surprise in this figure: the larger $\gamma$, the larger the velocity; this comes from the fact that larvae emerge faster as adults such that the TW velocity speed-up: this case occurs when the environmental conditions are optimal for the larvae development; in contrary, when the temperature is low, the maturation rate slow down and thus $\gamma$

\begin{figure}[h!]
    \centering
\includegraphics[width=0.49 \linewidth]{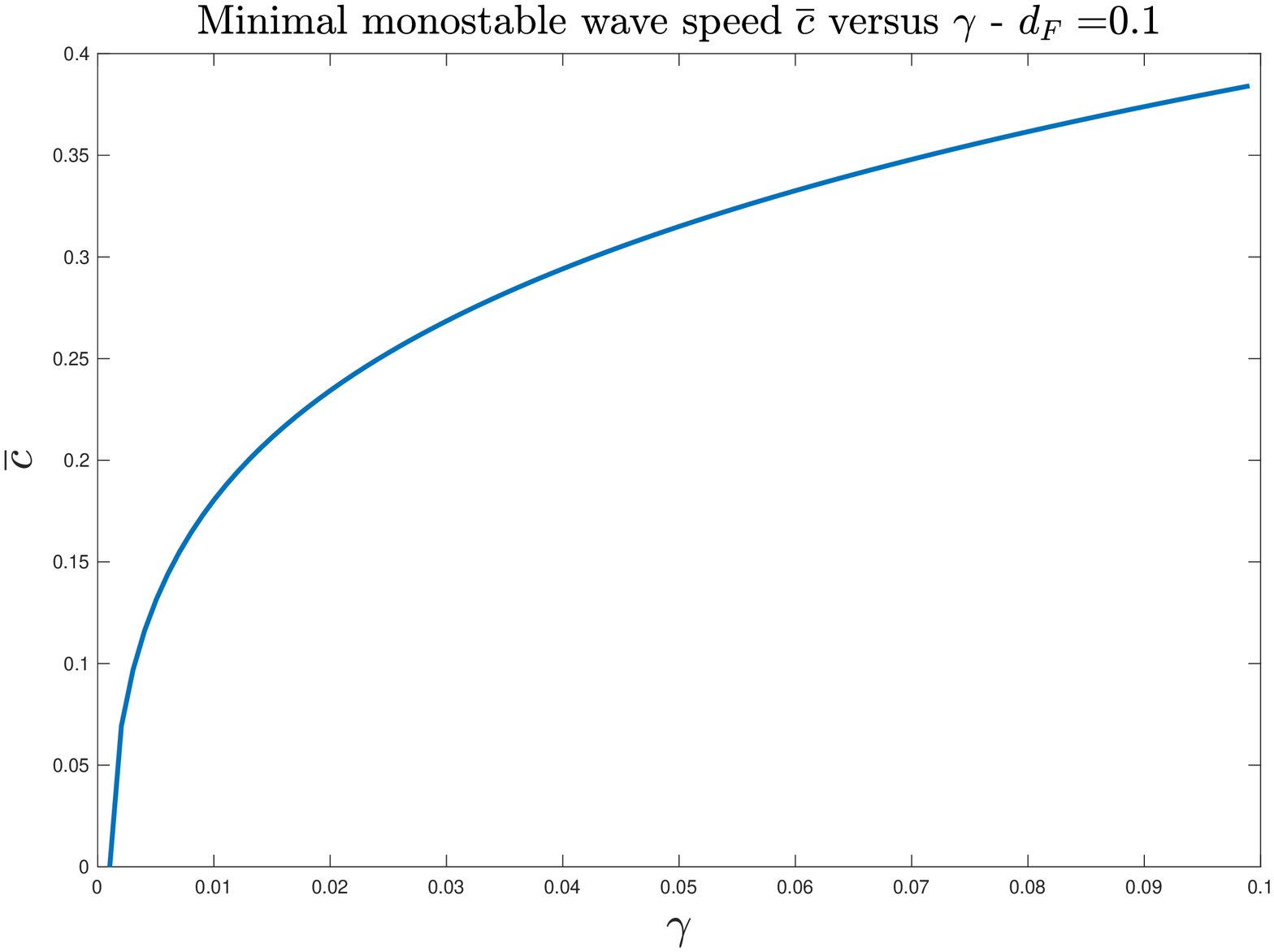}
\includegraphics[width=0.49 \linewidth]{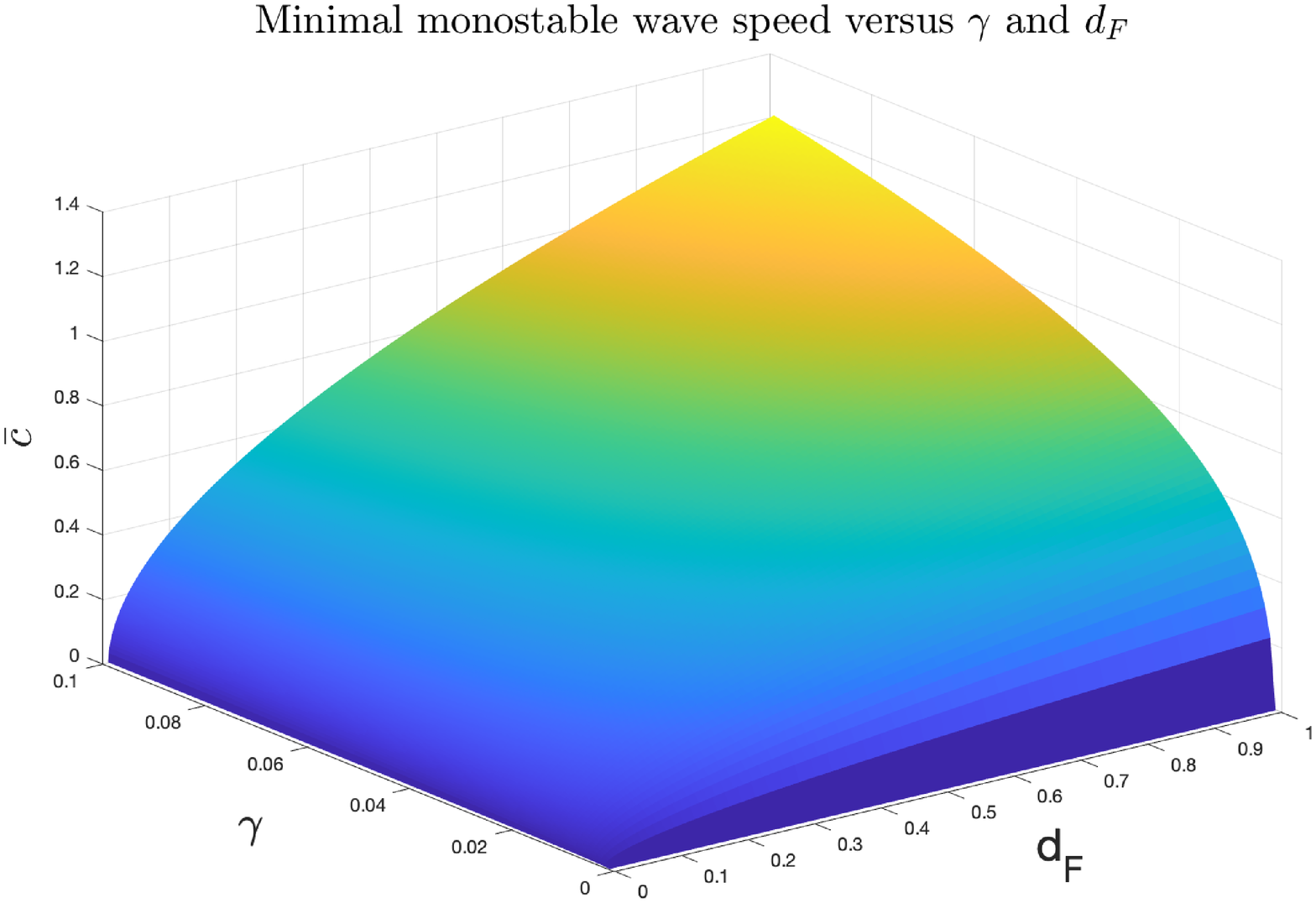}
    \caption{Variations of the minimal wave speed $\overline{c}$ of the monostable traveling wave solution of
    system (\ref{compact-STI-pde})-(\ref{H-0}) connecting the elimination equilibrium \textbf{0} and the positive
    equilibrium $E^*$ when no SIT occurs, i.e. $M_T=0$. (a): the evolution of $\overline{c}$ versus $\gamma$ for a given value of $d_F$, namely $d_F=0.1$; (b): the evolution of $\overline{c}$ versus $\gamma$ and $d_F$.} 
    \label{mini_speed_gamma_dF}
\end{figure}

In Fig. \ref{tw_mono}, page \pageref{tw_mono}, we represent the invasive monostable wave
solution of system (\ref{compact-STI-pde})-(\ref{H-0}) when there is
no SIT control. Only the immature stage (Fig. \ref{tw_mono}-(a))
and fertilized and eggs-laying female (Fig. \ref{tw_mono}-(b)) mosquito components are
displayed. The wave connects the unstable elimination equilibrium
$\textbf{0}$ and
the stable wild equilibrium, ${\bf{E}}^*\approx(18950,5412,7429)'$. Starting
with a local distribution of wild mosquitoes, transient states first take place (e.g. at times 25, 50, 75 and 100 days in figure
\ref{tw_mono}). After these transient states, the invasive
monostable wave occurs (e.g. at times 125, 150, 175 and 200 days in Fig.
\ref{tw_mono}, page \pageref{tw_mono}). Thanks to the previous estimate, the speed of the monostable traveling wave is $\overline{c}\approx 0.362$ km/day. In the long term dynamic, we have a complete invasion of the spatial domain (e.g. between times 375 and 400 days in figure \ref{tw_mono}).

\begin{figure}[h!]
    \hspace{-0.5cm}
    \subfloat[][Snapshots of immature stage dynamics]{  \includegraphics[scale=0.57]{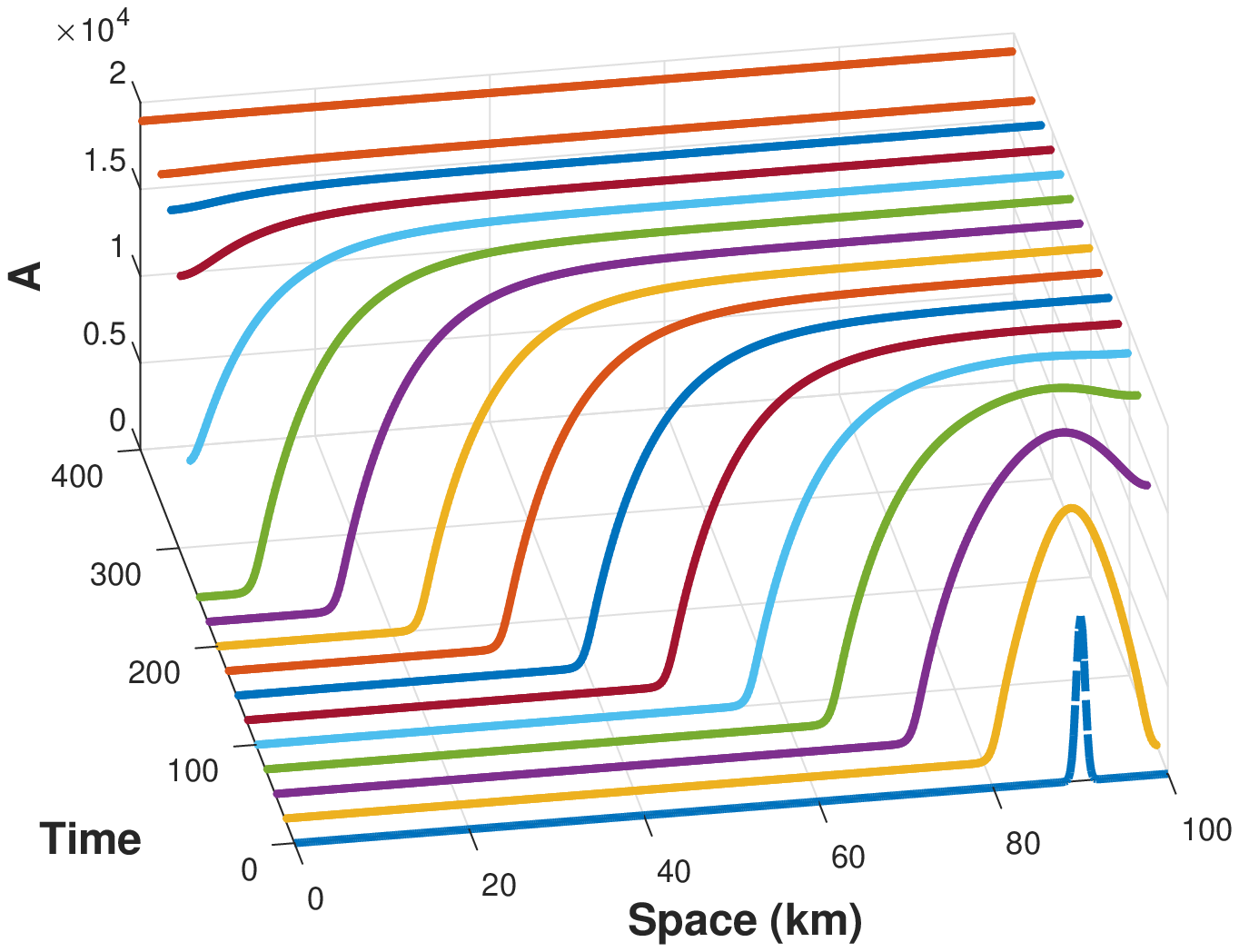}}
    \subfloat[][Snapshots of fertilized female dynamics]{  \includegraphics[scale=0.57]{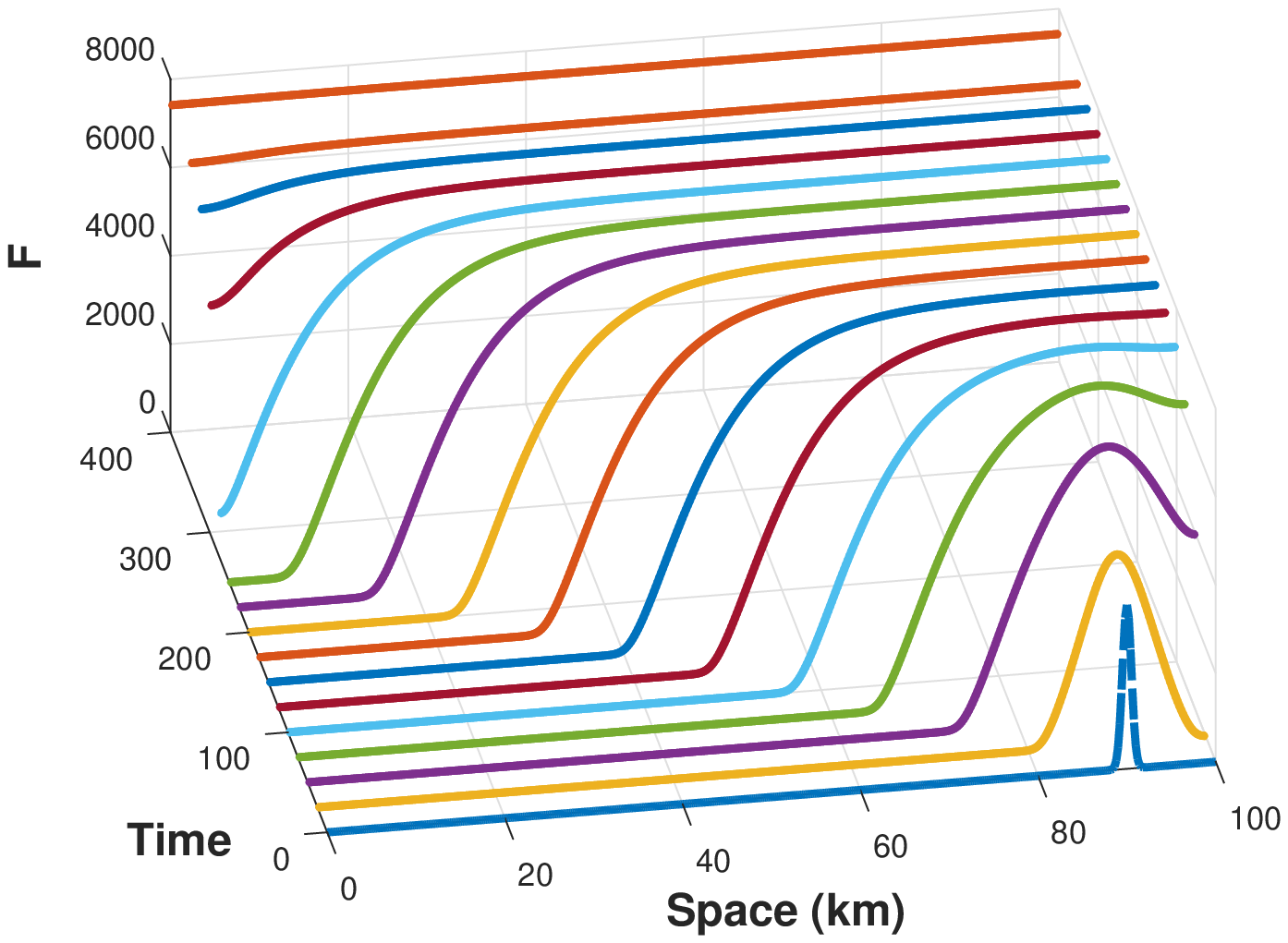}}
    \caption{Invasive monostable wave solution when there is no SIT release. The wave connects the unstable elimination equilibrium, $\textbf{0}$, to the stable wild equilibrium, ${\bf {E}}^*$. 
    }
    \label{tw_mono}
\end{figure}

In the next section, we assume that SIT releases are considered. We
further assume two initial configurations: a partial invasion of the
spatial domain and a full invasion of the spatial domain.

\subsection{With SIT control - $M_T>0$}

We now consider the monostable wave solution (e.g. $t=170$
days in Fig. \ref{tw_mono}, page \pageref{tw_mono}) as the initial setting. Therefore,
there exists a vector/pest-free subdomain and a vector/pest-persistent
subdomain.  We introduce sterile males, $0<M_T<M_{T_1}$, such that $\bf{0}$ becomes LAS, introducing a strong Allee effect.
We know that Allee effects can slow or even reverse traveling wave solutions: this is exactly what we observe in the following simulations.

In Figs. \ref{tw_serie_1}(a-b-c), page \pageref{tw_serie_1}, we observe that the introduction of sterile males, i.e. $M_T=1000, 2000, 3000$ respectively, slows down the traveling wave speed and thus the invasion: see also Fig. \ref{fig4}, page \pageref{fig4}, where numerical estimates of the traveling wave speed are provided. This shows that even if the SIT-threshold is not reached but the amount of sterile males to be released is sufficient, it can help to delay a pest/vector invasion.

However, there exists a critical value $M_T^c$, close to $M_T$, such that the traveling wave stops or reverses leading to elimination in the (very) long term (see Fig. \ref{tw_serie_1}(d) and Fig. \ref{fig4}). However, since $M_{T}^c$ is close to $M_{T_1}$, it is more interesting to release above $M_{T_1}$ since we know that $\bf{0}$ is GAS, such that the system will reach elimination more or less quickly depending on whether $M_T$ is larger or much larger than $M_{T_1}$, like $M_T=k\times M_{T_{1}}$ with $k>1$. 

To avoid the permanent use of massive releases, we can also use the massive and small releases strategy developed in \cite{Anguelov2019}. Indeed, when $M_T>0$, the SIT problem has three equilibria, $\bf{0}$, $\bf{E}_1$, and $\bf{E}_2$, where $\bf{E}_1$ is unstable while $\bf{0}$ and $\bf{E}_2$ are LAS, and such that $[\bf{0},\bf{E}_1)$ belongs to the basin of attraction of $\bf{0}$ and $\bf{E}_1$ is defined for a given value of $M_T$, say $100$ \cite{Anguelov2019}. 
Thus, the massive-small strategy consists of releasing first sterile males massively, i.e. $M_T=k\times M_{T_1}$, with $k>1$, in order to reach the parallelepiped $[\bf{0},\bf{E}_1)$. Then, once $[\bf{0},\bf{E}_1)$ is reached, we can switch from massive releases to small releases, $M_T=100$, and use the AS of $\bf{0}$ within $[\bf{0},\bf{E}_1)$ to drive the wild population to elimination. This is feasible but it requires to release the sterile males homogeneously over the treated area.


However, the previous strategies are unrealistic in the field: it is impossible to treat large areas using (permanent) massive releases. Also, some areas may be difficult to reach in order to release sterile males homogeneously. In general a barrier or a corridor strategy is recommended, but the difficulty is to define the width of this corridor and also the right strategy to avoid the risk of pest/mosquito emergence within the untreated area that has to be protected.

\begin{figure}[h!]
    \centering

    \subfloat[][Bistable Invasion - $M_T=1000$]{\includegraphics[width=0.49 \linewidth]{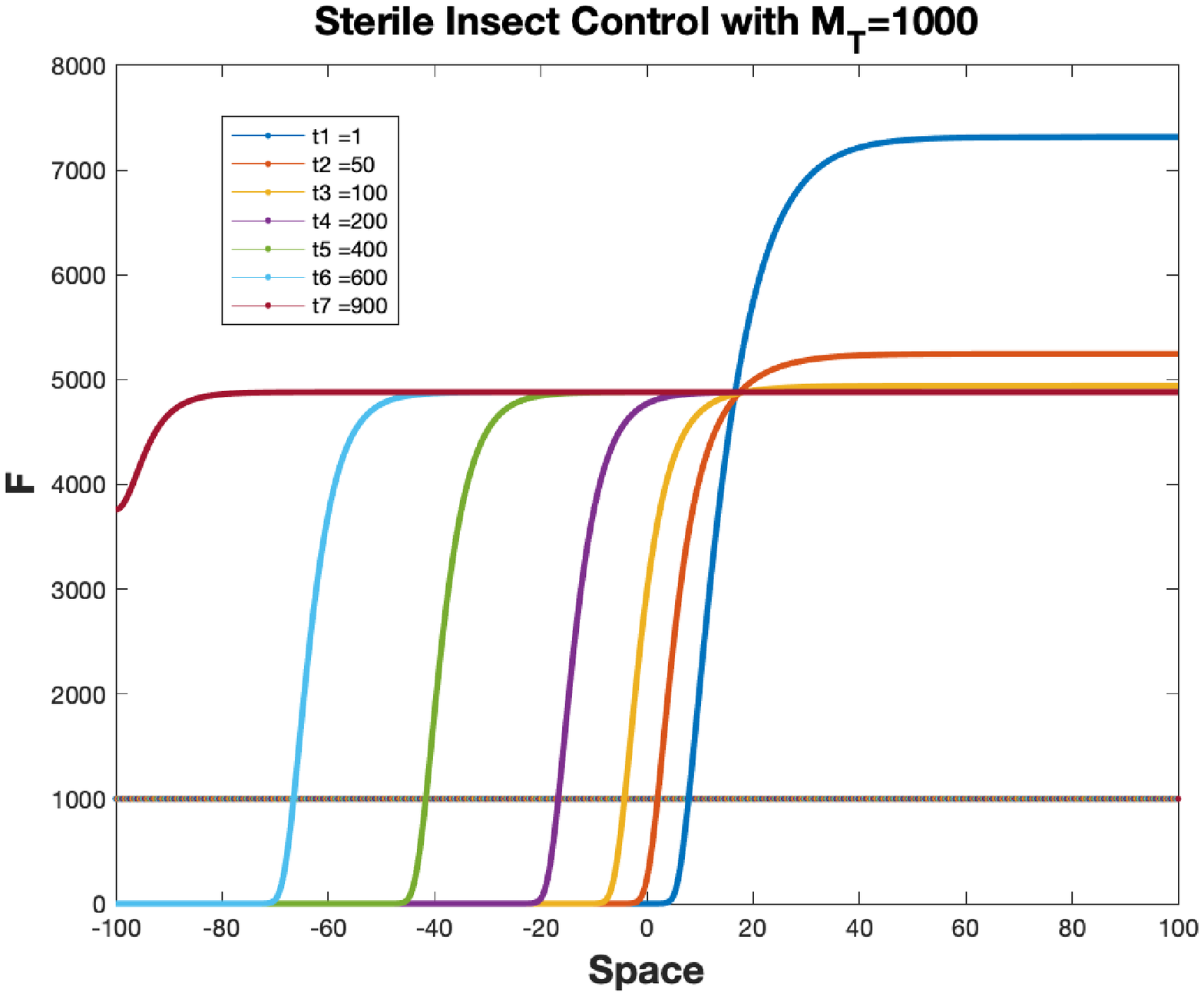}}
    \subfloat[][Bistable Invasion - $M_T=2000$]{\includegraphics[width=0.49 \linewidth]{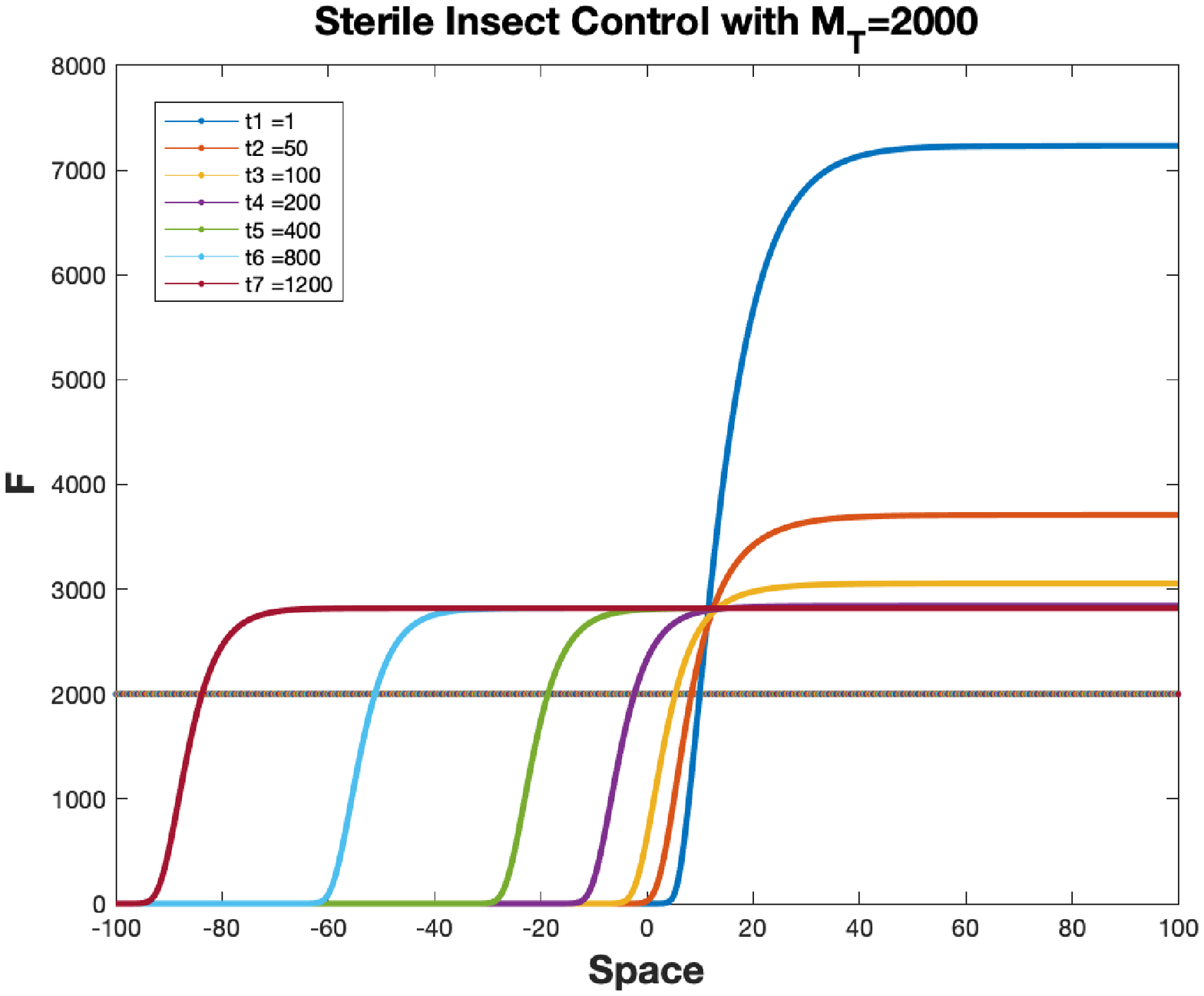}}
    
    \vspace{0.2cm}
    \subfloat[][Bistable Invasion - $M_T=3000$]{\includegraphics[width=0.49 \linewidth]{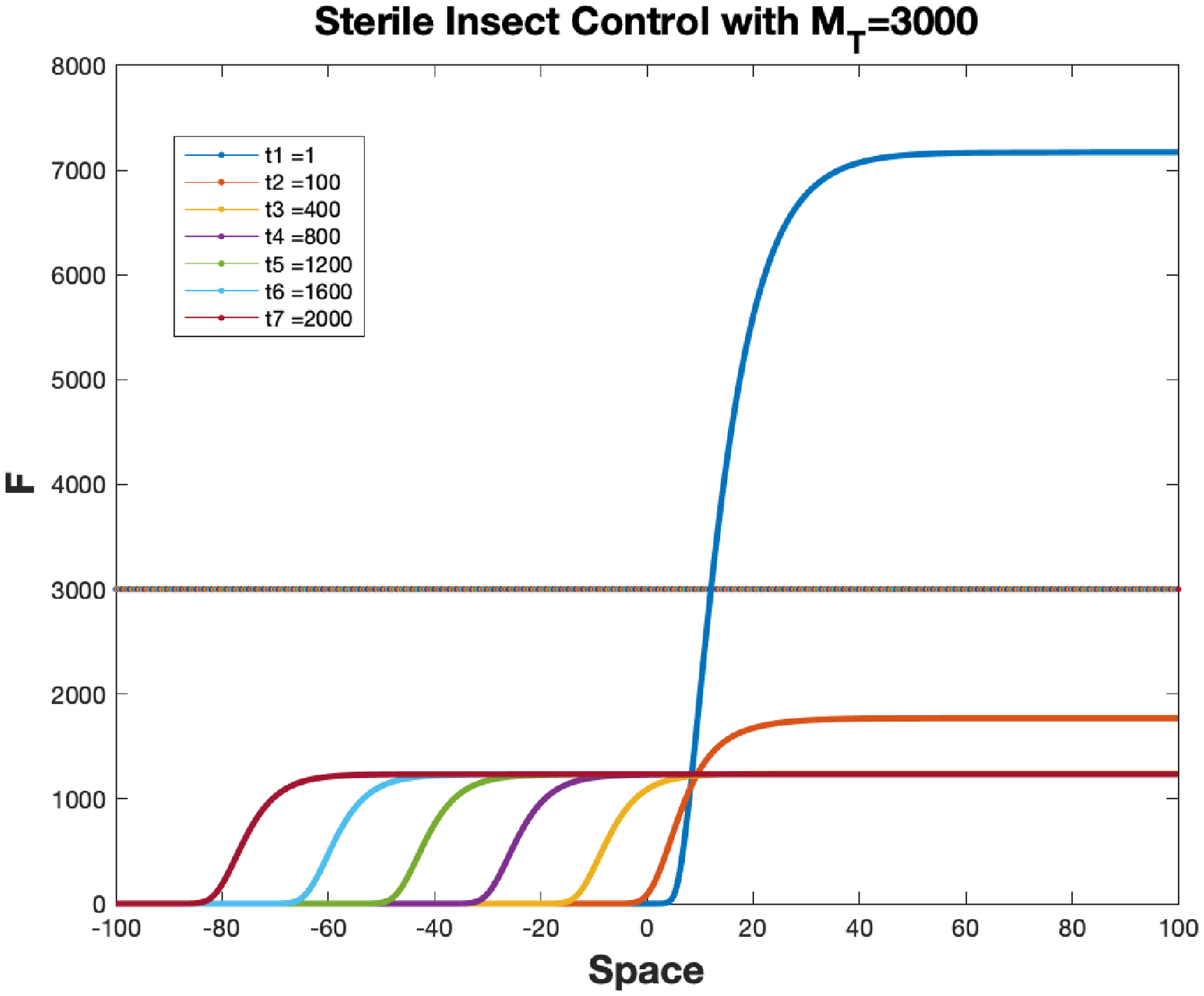}}
    \subfloat[][Bistable Elimination - $M_T=3720$]{\includegraphics[width=0.49 \linewidth]{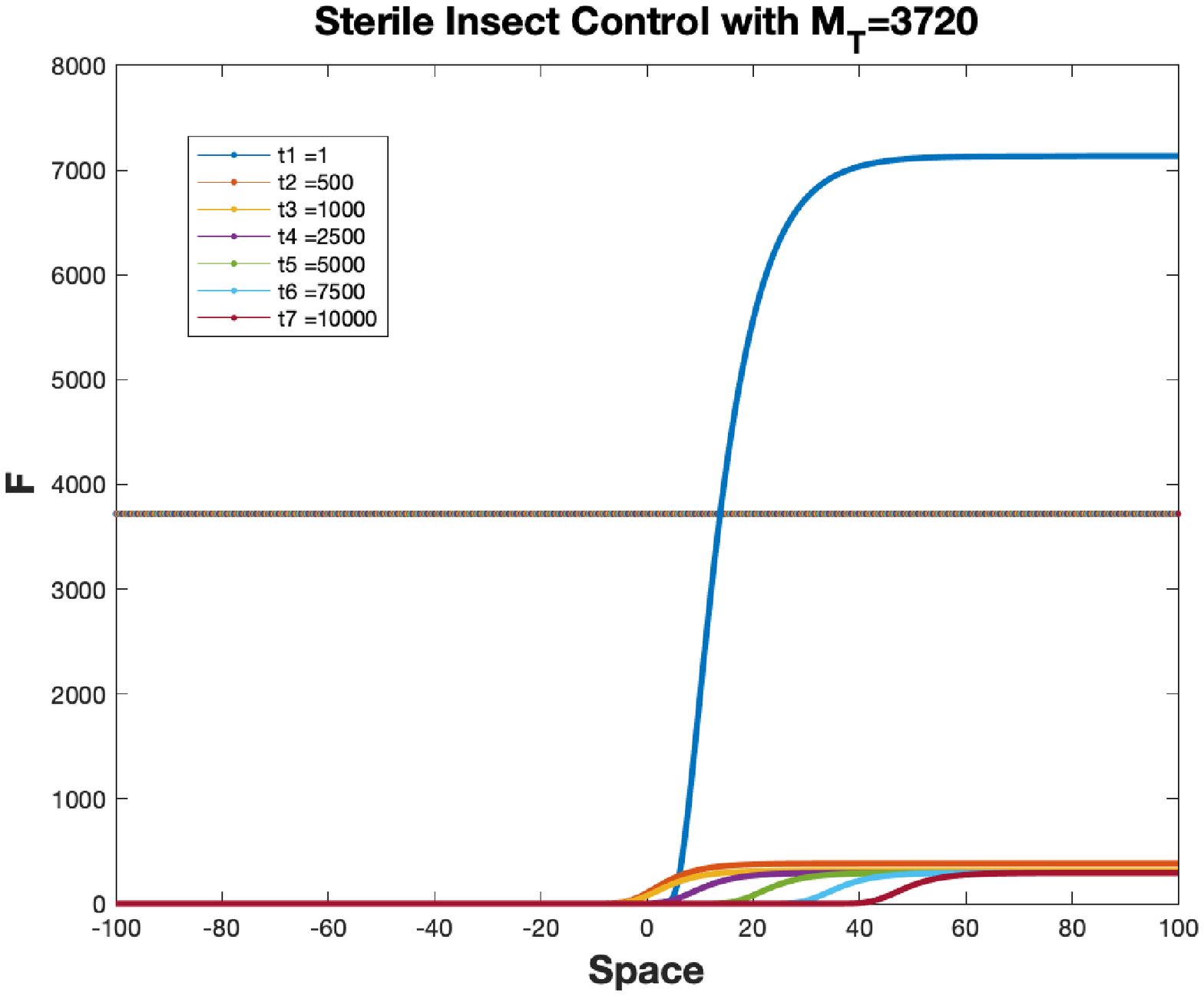}}

    \caption{Long term dynamics for several values of SIT releases when $0<M_T<M_{T_1}$}
    \label{tw_serie_1}
\end{figure}

\begin{figure}[H]
    \centering
\includegraphics[width=0.75 \linewidth]{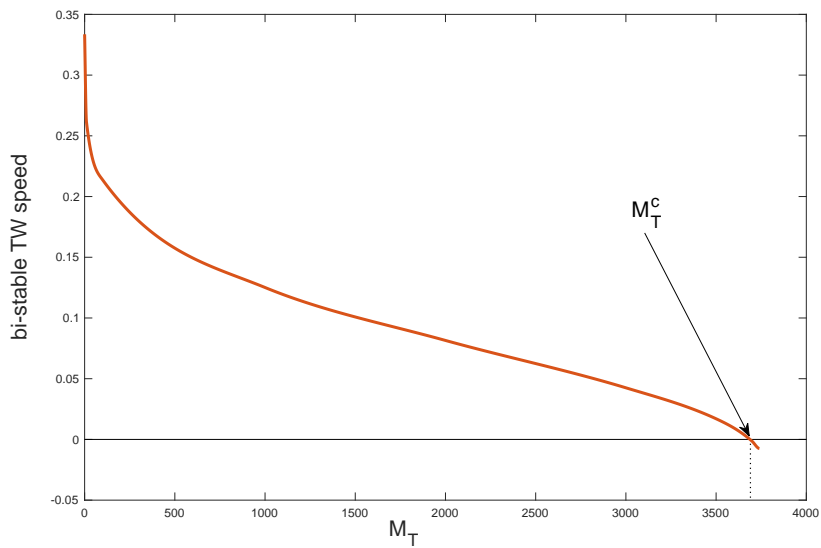}

    \caption{Numerical estimates of the bistable traveling wave speed when $0<M_T<M_{T_1}$}
    \label{fig4}
\end{figure}

\subsection{The ``corridor/barrier strategy" - blocking wave}
Following \cite{Lewis1993, Seirin2013b}, we want to apply the control strategy developed and studied in \cite{Anguelov2019}, combining massive releases (locally) and small releases to stop the invasion of mosquitoes or pest and eventually to push them back. In fact we use the location of the front linking $\bf{0}$ and $\bf{E}_2$, to define a sufficiently large corridor, where massive releases, $M_T>M_{T_1}$, will be used, while the free area will be treated with small releases only.

Here, consider that we want to protect an area delimited by $0$ and $x_{\min}$ from an invasion of pest/mosquitoes using SIT releases. To do that we release a massive number of sterile males within corridor  $[x_{\min},x_{\max}]$ in order to block the invasive front. 
Within $[x_{\min},x_{\max}]$, we release $1.1\times M_{T_1}$ sterile males, while in $[0,x_{\min})$, we release only $M_{T_1}/100$ sterile males, which leads to define the unstable equilibrium. When $d_F=0.05$ or $d_F=0.1$, we choose $x_{\min}$ and $x_{\max}$ such that $x_{\max}-x_{\min}=20$ km: see Figs. \ref{blocking1} and \ref{blocking2}, page \pageref{blocking1}. Thus clearly, for the same amount for massive releases, for different, but close, values of $d_F$, the starting dates of the corridor control is crucial: $300$ for $d_F=0.05$, and $200$ for $d_F=0.1$. As seen, the strategy we have developed in \cite{Anguelov2019} is still very suitable when considering the spatial component.

\begin{figure}[h!]
    \centering
 \includegraphics[scale=0.4]{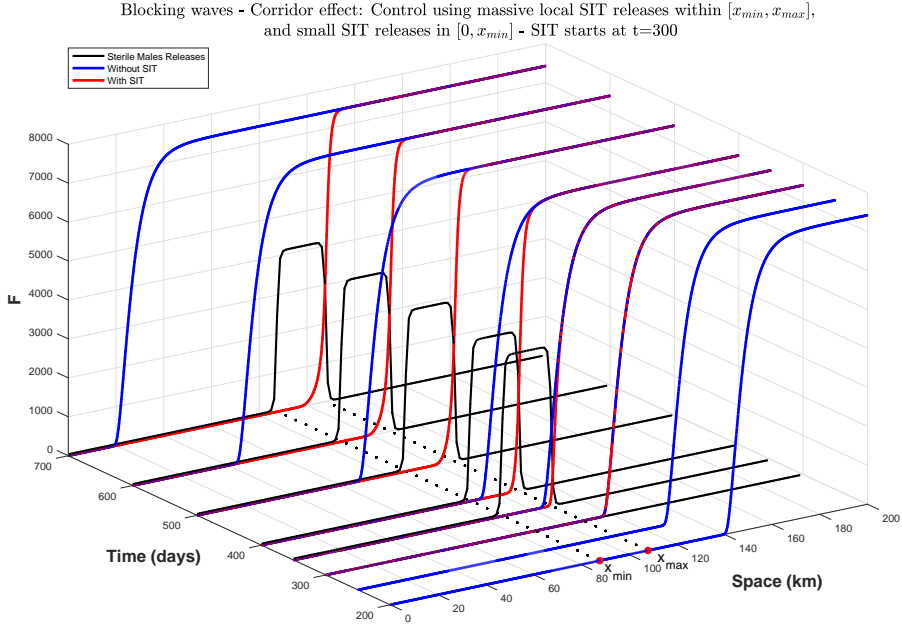}
    \caption{Control strategy using a 20 km corridor to block mosquito/pest invasion using massive releases within the corridor, and small releases on the left side of the corridor. SIT starting date: $t=300$. We consider 
     $\gamma=0.08$, $d_F=0.05$, $d_M=0.05$, $M_T=1.1\times M_{T_1}$. The rest of the parameters are given in Table
\ref{Table-valeurs-parametre}. }
    \label{blocking1}
\end{figure}

\begin{figure}[h!]
    \centering
 \includegraphics[scale=0.4]{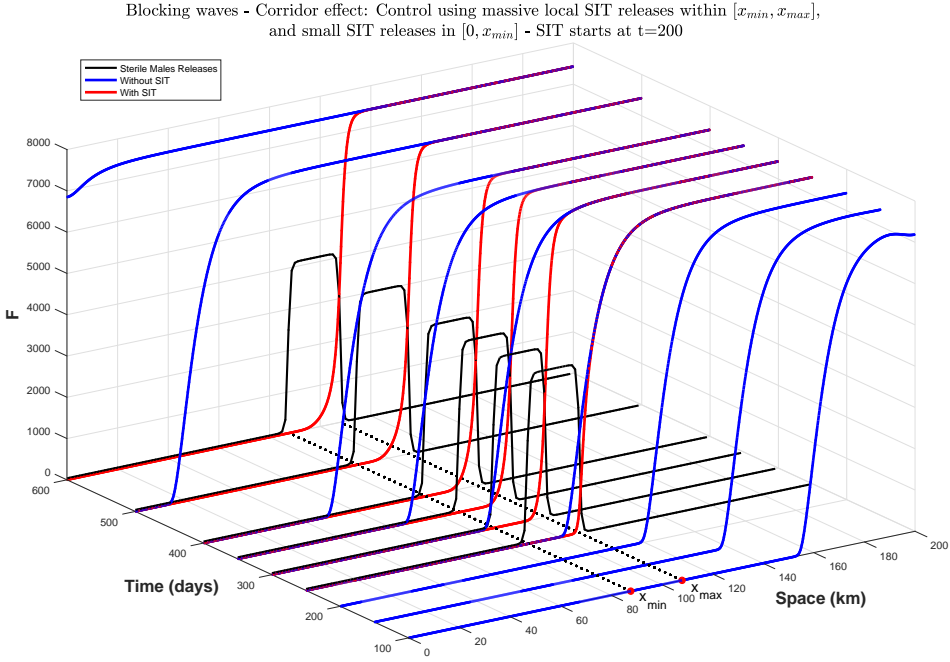}
    \caption{Control strategy using a 20 km corridor to block mosquito/pest invasion using massive releases within the corridor, and small releases on the left side of the corridor. SIT starting date: $t=200$. We consider 
     $\gamma=0.08$, $d_F=0.1$, $d_M=0.05$, $M_T=1.1\times M_{T_1}$. The rest of the parameters are given in Table
\ref{Table-valeurs-parametre}. }
    \label{blocking2}
\end{figure}

The same strategy can be used to stop and push back the wild pest/mosquito wave. Indeed, once the wave is blocked, it might be possible to move to the right the corridor, like a ``traveling carpet". Using the same parameters and also the same corridor width, but releasing $1.8\times M_{T_1}$ sterile males inside $[x_{\min}(t),x_{\max}(t)]$ we obtain the simulations in Fig. \ref{pushing}, page \pageref{pushing}. The dotted lines indicate the original location of the control to block the wave. Then, the corridor is moved once, at the left-end, $x_{\min}(t)$ of the corridor, we have $(E,M,F)'<(E_1,M_1,F_1)'$.

It is possible to push back faster the mosquito/pest traveling wave by expanding the corridor width from $20$ km to $40$ km (see Fig. \ref{pushing2} page \pageref{pushing2}), for instance. The counterpart is, of course, to release a larger number of sterile males within the corridor. This shows that a relationship might exists between the size of the massive releases, the corridor depth and the duration of the massive releases. These parameters can also be constrained by the sterile males production constraint.

Last but not least, the same strategy could be used to treat a whole domain still invaded by a pest or mosquito and to get ride of them after longtime treatment. Of course, in order to avoid a re-invasion, it will be necessary to continue the SIT treatment, with small releases, in order to maintain the wild population under a given threshold, $\bf{E}_1$, for instance.

\begin{figure}[h!]
    \centering
 \includegraphics[scale=0.4]{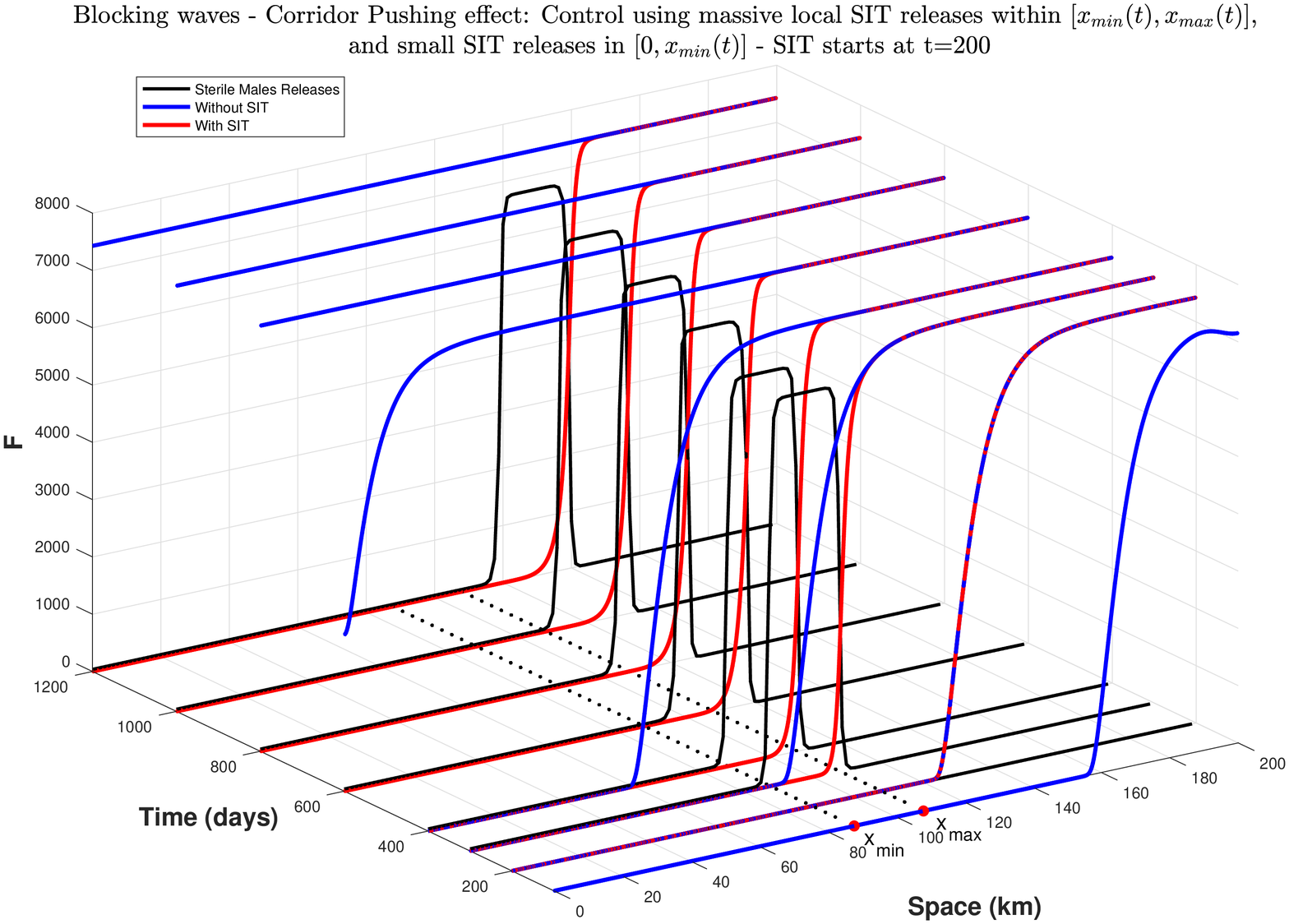}
    \caption{Dynamic Control strategy using a {\bf 27} km corridor to block and push back mosquito/pest invasion  using massive releases within the corridor, and small releases on the left side of the corridor. We consider 
     $\gamma=0.08$, $d_F=0.1$, $d_M=0.05$, and $M_T=1.8\times M_{T_1}$. The rest of the parameters are given in Table
\ref{Table-valeurs-parametre}. }
    \label{pushing}
\end{figure}

\begin{figure}[h!]
    \centering
 \includegraphics[scale=0.4]{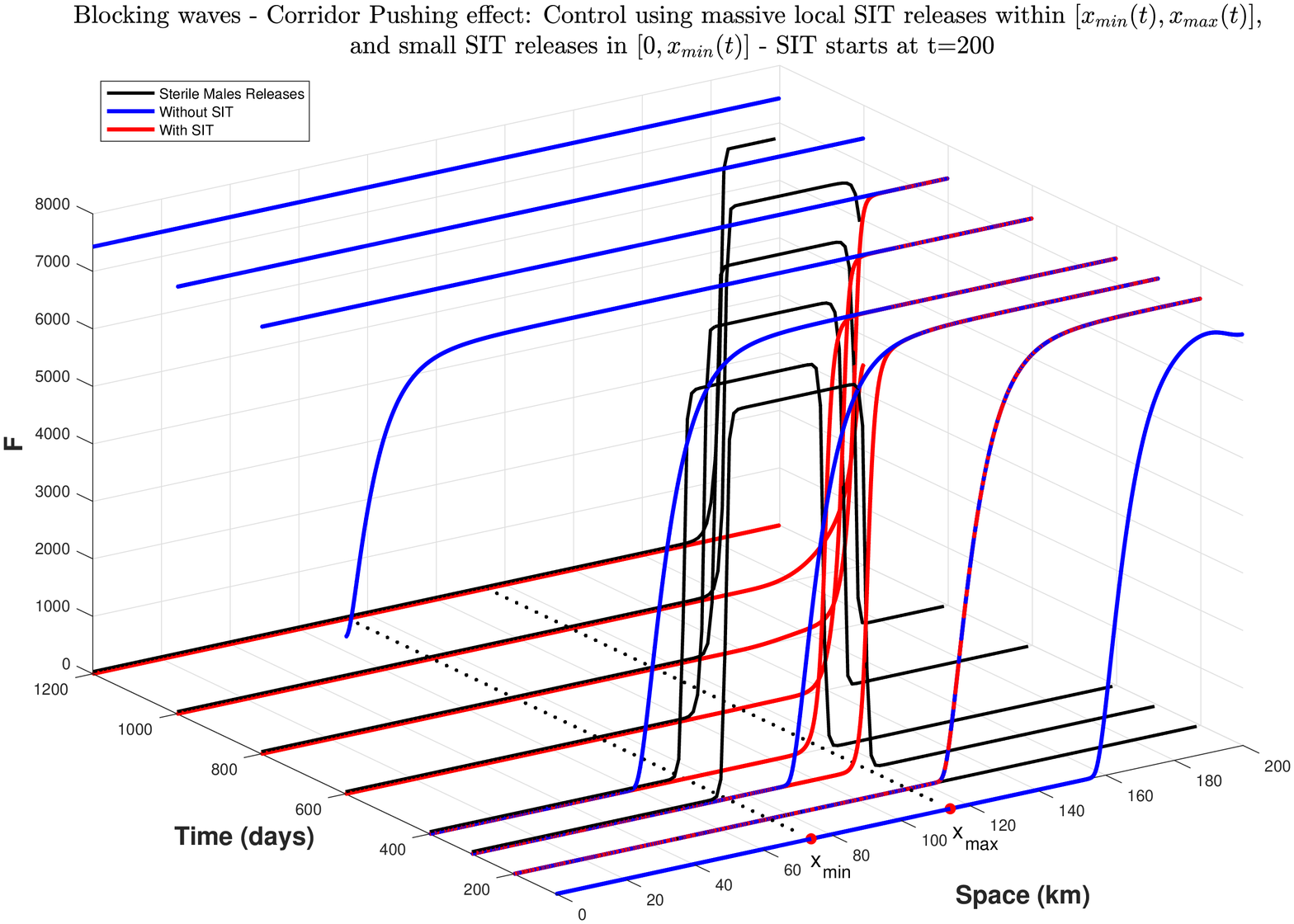}
    \caption{Dynamic Control strategy using a {\bf 40} km corridor to block and push back the invasion of mosquitoes using massive releases within the corridor, and small releases on the left side of the corridor. We consider 
     $\gamma=0.08$, $d_F=0.1$, $d_M=0.05$, $M_T=1.8\times M_{T_1}$. The rest of the parameters are given in Table
\ref{Table-valeurs-parametre}. }
    \label{pushing2}
\end{figure}

The previous ``corridor" and ``barrier" strategies could also be used against other pest, like \textit{Ceratitis capitata} (``medfly") or \textit{Bactrocera dorsalis} (``oriental fruit fly"). Indeed, \textit{C. capitata} was removed from invaded areas in southern Mexico, and for thirty years a sterile fly barrier across Guatemala has maintained Mexico and the USA medfly-free. The barrier strategy is still used successfully against screwworms, \textit{Cochliomyia hominivorax} (Coquerel), at the border of Panama and Colombia, protecting North-America from this damaging pest \cite{screwworm}. However, the objectives are now to use SIT more locally to protect or free (from pest) specific places instead of treating large area.

\section{Conclusion}
\label{Conclusion}
In this work, we have extend the temporal model studied in \cite{Anguelov2019} to a spatio-temporal one. In order to show the existence of traveling wave solutions we have also extended results from \cite{Fang2009}. This allows us to show that when $0\leq M_T<M_{T_1}$, on the whole area, a monotone wavefront may exist, connecting $\bf{0}$ with a positive equilibrium. When SIT control is such that $M_T^c<M_T<M_{T_1}$, on the whole area, then we showed, numerically, that the population can be driven to elimination. However, for realistic use, we have considered the strategy of massive-small releases studied in \cite{Anguelov2019}, using massive releases in a spatial corridor of sufficient depth to block the front of the traveling wave, and using small releases within the targeted area, i.e. the pest/vector free area. We show that this strategy can be successful to block the invasion and eventually can also be used, using a ``dynamic corridor", to push back the pest or the vector and thus free invaded area.

Theoretically, the corridor case is still an open problem. In particular, in future studies, it would be useful to derive a relationship between the width of the corridor, the size of the massive releases, the sterile insects manufacturing capacity, etc. Such a result would be useful in order to implement appropriate sterile male release strategies.

Last but not least, our work takes place within several SIT projects in la R\'eunion (a French tropical island, located in the Indian Ocean, east from Madagascar and southwest from Mauritius island) and Corsica (a French island in the Mediterranean Sea, southeast of France and west of Italy), for which field experiments, and in particular field releases, are expected. We hope that our results will help to improve the release strategies and thus the impact of SIT, in combination with other biocontrol strategies, using mechanical control, prophylaxis, etc.
\vspace{0.5cm}
\section*{Acknowledgments}

The authors are partially supported by the ``SIT feasibility project against \textit{Aedes albopictus} in Reunion Island", TIS 2B (2020-2021), jointly funded by the French Ministry of Health and the European Regional Development Fund (ERDF). The authors are (partially) supported by the DST/NRF SARChI Chair  in Mathematical Models and Methods in Biosciences and Bioengineering at the University of Pretoria (grant 82770). The authors acknowledge the support of the STIC AmSud project NEMBICA (2020–2021), n$^o$20-STIC-05. YD is also partially supported by the CeraTIS-Corse project, funded by the call Ecophyto 2019 (project n$^o$: 19.90.402.001), against \textit{Ceratitis capitata}. Part of this work is also done within the framework of the GEMDOTIS project (Ecophyto 2018 funding), that is ongoing in La R\'eunion. This work was also co-funded by the European Union Agricultural Fund for Rural Development (EAFRD), by the Conseil R\'egional de La R\'eunion, the Conseil D\'epartemental de La R\'eunion, and by the Centre de Coop\'eration internationale en Recherche Agronomique pour le D\'eveloppement (CIRAD). YD greatly acknowledges the Plant Protection Platform (3P, IBiSA)

\bibliographystyle{plain}
\bibliography{bibliography}

\appendix

\section{Some results on reaction-diffusion (RD)
systems}\label{appendix-material}

 \comment{
 
 Let us consider the following scalar RD equation
\begin{equation}\label{RD}
 \begin{array}{lcl}
 \displaystyle\frac{\partial u}{\partial t} &=& d\displaystyle\frac{\partial^2 u}{\partial x^2}+F(u), \quad t> 0,\quad x\in\mathbb{R}\\
 \end{array}
\end{equation}
together with the initial condition

\begin{equation}\label{RD-CI}
\begin{array}{lcl}
 u(x,0) &=& u_0(x), \quad x\in\mathbb{R}\\
 \end{array}
\end{equation}
where $d>0$ and $F$ is a real value function. We first give the
definition of a $C_0-$semigroup.

\begin{definition}(McBride (1987) \cite{McBride1987}, Definition 1.1, P. 9)\label{definition-semigroup}\\A $C_0-$semigroup or strongly continuous semigroup
of bounded linear operators on a Banach space $X$ is a family
$\{S(t)\}_{t\geq0}\subseteq B(X)$ such that:
\begin{itemize}
    \item[(i)] $S(0)=I$, the identity operator on $X$
    \item[(ii)] $S(s)S(t)=S(s+t)$ for all $s, t\geq0$
    \item[(iii)] For each fixed $x\in X$, $S(t)x\rightarrow x$ as
    $t\rightarrow0^+$.
\end{itemize}
$B(X)$ is the set of bounded linear operator on $X$.
\end{definition}

\begin{definition}(McBride (1987) \cite{McBride1987}, Definition 1.14, P. 25)\\
Let $\{S(t)\}_{t\geq0}$ be a $C_0-$semigroup or strongly continuous
semigroup of bounded linear operators on a Banach space $X$.
$\{S(t)\}_{t\geq0}$ is called a $C_0-$semigroup of contractions if
$\|S(t)\|\leq1$ for all $t\geq0$.
\end{definition}

Let $B$ be a Banach Space. For $t>0$ define $S(t)$ on $B$ by

\begin{equation}\label{Gauss-semigroup}
\begin{array}{ccl}
  (S(t)f)(x) &=&\displaystyle\frac{1}{\sqrt{4\pi d
t}}\displaystyle\int_{\R}\exp\left(-\displaystyle\frac{(x-y)^2}{4d
t}\right)f(y)dy   \qquad (f\in X) \\
   & = & (G_t\star f)(x)
\end{array}
\end{equation}
where $$G_t(x)=\displaystyle\frac{1}{\sqrt{4\pi d
t}}\exp\left(-\displaystyle\frac{x^2}{4d t}\right).$$


Let $C_{ub}(\mathbb{R})$ be the Banach space of bounded, uniformly
continuous function on $\mathbb{R}$
 equipped with the sup norm. The following result
follows from Engel and Nagel (2006) \cite{Engel2006}, page 56.

\begin{lemma}(The Gauss-Weierstrass semigroup in
$\mathbb{R}$)\\\label{lemme-mcBride}
    The family $\{S(t)\}_{t\geq0}$ defined by (\ref{Gauss-semigroup})
with $S(0)=I$ is a $C_0-$semigroup of bounded linear operators on
the Banach space $B=C_{ub}(\mathbb{R})$.
\end{lemma}

For an operator $T$ defined on a subset of $B$ with range in $B$, we
denote the domain of $T$ by $D(T)$ and write $T:D(T)\subseteq
B\rightarrow B$. The following result holds.

\begin{theorem}\label{theoreme-McBride}
For $B=C_{ub}(\mathbb{R})$, the Gauss-Weierstrass semigroup has
infinitesimal generator $A$ where $$D(A)=\{f\in B: f'' \quad
\mbox{exists},\quad f'\in B,\quad f''\in B\}$$ and
$$Af=f''\qquad (f\in D(A)).$$
\end{theorem}

\begin{proof}
The proof is done in the same way as Theorem 5.5, p. 93 of McBride
(1987) \cite{McBride1987} but by considering the Banach space
$C_{ub}(\mathbb{R})$ instead of $L^2(\mathbb{R})$. Hence, in the
sequel we just recall the main ideas. Firstly, we consider the
$C_0-$group of translation $\{T(t)\}_{t\in \mathbb{R}}$ on $B$
(McBride (1987) \cite{McBride1987}, p. 27; Engel and Nagel (2006)
\cite{Engel2006}, p. 30) with infinitesimal generator $\tilde{A}$
defined by
$$D(\tilde{A})=\{f\in B: f' \quad \mbox{exists},\quad f'\in B\}$$ and
$$\tilde{A}f=f'\qquad (f\in D(\tilde{A})).$$
Secondly, since $\{U(t)\}_{t\geq0}$ defined by

\begin{equation}
\begin{array}{l}
U(0)=I,\quad   U(t) =\displaystyle\frac{1}{\sqrt{4\pi d
t}}\displaystyle\int_{\R}\exp\left(-\displaystyle\frac{(u)^2}{4d
t}\right)T(u)du   \qquad (t\geq0) \\
\end{array}
\end{equation}
is the Gauss-Weierstrass semigroup (Theorem 5.5, p. 93 of McBride
(1987) \cite{McBride1987}), we deduce from Theorem 5.3, p. 92 of
McBride (1987) \cite{McBride1987} that the infinitesimal generator
of $\{U(t)\}_{t\geq0}$ is $\tilde{A}^2$ with domain
$$D(\tilde{A}^2)=\{f\in D(\tilde{A}): \tilde{A}f\in D(\tilde{A})\}.$$ This ends the proof.
\end{proof}
}

Let us consider the abstract Cauchy problem: 
\begin{equation}\label{RD-2}
\left\{
 \begin{array}{l}
 \displaystyle\frac{d u}{d t} +A u= F(u),\\
 u(0)=u_0
 \end{array}
 \right.
\end{equation}
where for $B=C_{ub}(\mathbb{R})$,

\begin{equation}\label{RD-A}
\left\{
\begin{array}{l}
  D(A) = \{f\in B: f'' \quad
\mbox{exists},\quad f'\in B,\quad f''\in B\}, \\
  Au = -du''.
\end{array}
\right.
\end{equation}

\comment{
According to Theorem \ref{theoreme-McBride}, the operator $A$
defined in (\ref{RD-A}) generates on $B=C_{ub}(\mathbb{R})$ the
$C_0-$semigroup of Gauss-Weierstrass $\{S(t)\}_{t\geq0}$ defined by
(\ref{Gauss-semigroup}) with $S(0)=I$.

\begin{definition}\label{local-lipschitz}
Suppose $F$ is a nonlinear operator from a Banach space $B$ into
$B$. $F$ is said to satisfy the local Lipschitz condition if for any
positive constant $M > 0$, there is a positive constant $L_M$
depending on $M$ such that when $u, v\in B$, $\|u\|\leq M$ and
$\|v\|\leq M$, $$\|F(u)-F(v)\|_B\leq L_M\|u-v\|_B.$$
\end{definition}

For
}
Systems of reaction-diffusion equations assume also the form (\ref{RD-2})
where $u=(u_1, u_2,...,u_n)$, $F=(F_1, F_2,...,F_n)$, $d=diag(d_1,
d_2,...,d_n)$ with $d_i>0$, for $i=1,2,...,n$ and\\
$\displaystyle\frac{\partial^2 u}{\partial
x^2}:=\left(\displaystyle\frac{\partial^2 u_1}{\partial
x^2},\displaystyle\frac{\partial^2 u_2}{\partial
x^2},...,\displaystyle\frac{\partial^2 u_n}{\partial x^2}\right)$. The corresponding Gauss-Weierstrass $C_0-$semigroup\\ 
$S(t)=(S_1(t),S_2(t),...,S_n(t))$ is defined on a Banach space
$B=B_1\times ...\times B_n$ by $S_i(0)=I$ and for $t>0$
\begin{equation}\label{Gauss-semigroup-n}
\begin{array}{ccl}
  (S_i(t)f)(x) &=&\displaystyle\frac{1}{\sqrt{4\pi d_i
t}}\displaystyle\int_{\R}\exp\left(-\displaystyle\frac{(x-y)^2}{4d_i
t}\right)f(y)dy   \qquad (x\in \mathbb{R}, f\in B_i) \\
   & = & (G_{t,i}\star f)(x)
\end{array}
\end{equation}
where $$G_{t,i}(x)=\displaystyle\frac{1}{\sqrt{4\pi d_i
t}}\exp\left(-\displaystyle\frac{x^2}{4d_i t}\right), \quad
i=1,2,...,n.$$ 

In the case where some (but not all) diffusion coefficient
$d_{i_0}=0$, the corresponding operator in the Gauss-Weierstrass
$C_0-$semigroup is taken as (Rauch and Smoller (1978)
\cite{Rauch1978}) $S_{i_0}(0)=I$ and for $t>0$

\begin{equation}\label{Gauss-Rauch-Smoller}
\begin{array}{ccl}
  (S_{i_0}(t)f)(x) & = & (G_{t,i_0}\star f)(x)=f(x)
\end{array}
\end{equation}

with $G_{t,i_0}(x)=\delta(x)$. $\delta$ is the Dirac distribution
and is defined by

$$
  \delta(x)= \left\{\begin{array}{cl}
                      0, & x\neq0 \\
                      +\infty, & x=0
                    \end{array}
  \right.
$$
with $$\displaystyle\int_\mathbb{R}\delta(y)dy=1.$$

\section{Proof of Lemma \ref{boundeness-lemma}}\label{AppendixA0}
Recall that $w=(A,M,F)'$ and $$H(w)=\left(\phi F-(\gamma+\mu_{A,1}+\mu_{A,2}A)A,(1-r)\gamma A-\mu_MM, \displaystyle\frac{ M}{M+M_T}r\gamma A-\mu_FF\right)'.$$ Following \cite[pages 210-211]{Smoller1983}, one has
$$
\begin{array}{l}
G=-A, \quad \left.\nabla G\cdot H\right|_{A=0}=-\phi F\leq0 \quad\mbox{in}\quad \Gamma_{\rr\leq1}\cup\Gamma_{\rr>1}, \quad \mbox{so}\quad A\geq0.\\
G=-M, \quad \left.\nabla G\cdot H\right|_{M=0}=-(1-r)\gamma A\leq0 \quad\mbox{in}\quad \Gamma_{\rr\leq1}\cup\Gamma_{\rr>1}, \quad \mbox{so}\quad M\geq0.\\
G=-F, \quad \left.\nabla G\cdot H\right|_{F=0}=-\dfrac{M}{M+M_T}r\gamma A\leq0 \quad\mbox{in}\quad \Gamma_{\rr\leq1}\cup\Gamma_{\rr>1}, \quad \mbox{so}\quad M\geq0.\\
\end{array}
$$
\begin{enumerate}
	\item Assume that $\rr\leq1$ or equivalently $\dfrac{r\gamma}{\mu_F}\leq \dfrac{\gamma+\mu_{{A,1}}}{\phi}$. 
	
	$$
	\begin{array}{l}
	G=M-\dfrac{(1-r)\gamma}{\mu_M}k_1, \quad \left.\nabla G\cdot H\right|_{M=\dfrac{(1-r)\gamma}{\mu_M}k_1}=(1-r)\gamma(A-k_1)\leq0 \quad\mbox{in}\quad \Gamma_{\rr\leq1},\\
	 \quad \mbox{so}\quad M\leq \dfrac{(1-r)\gamma}{\mu_M}k_1.\\
	G=F-\dfrac{r\gamma}{\mu_F}k_1, \quad \left.\nabla G\cdot H\right|_{F=\dfrac{r\gamma}{\mu_F}k_1}\leq r\gamma (A-k_1)\leq0 \quad\mbox{in}\quad \Gamma_{\rr\leq1}, \quad \mbox{so}\quad F\leq \dfrac{r\gamma}{\mu_F}k_1.\\
	G=A-k_1, \quad \left.\nabla G\cdot H\right|_{A=k_1}\leq\phi\left(F-\dfrac{\gamma+\mu_{{A,1}}}{\phi}k_1\right)\leq\phi\left(F-\dfrac{r\gamma}{\mu_F}k_1\right)\leq0 \quad\mbox{in}\quad \Gamma_{\rr\leq1},\\ \quad \mbox{so}\quad A\leq k_1.\\
	\end{array}
	$$
	\item Assume that $\rr>1$. Hence the positive wild equilibrium $E^*=(A^*,M^*,F^*)'$ is defined. Note that 
	$M^*=\dfrac{(1-r)\gamma}{\mu_M}A^*$ and $F^*=\dfrac{r\gamma}{\mu_F}A^*$.	
	One has
	$$
	\begin{array}{l}
	G=M-k_2M^*, \quad \left.\nabla G\cdot H\right|_{M=k_2M^*}=(1-r)\gamma(A-k_2A^*)\leq0 ~ \mbox{in} ~ \Gamma_{\rr>1},
	 ~ \mbox{so} ~ M\leq k_2M^*.\\
	G=F-k_2F^*, \quad \left.\nabla G\cdot H\right|_{F=k_2F^*}\leq r\gamma (A-k_2A^*)\leq0 \quad\mbox{in}\quad \Gamma_{\rr>1}, \quad \mbox{so}\quad F\leq k_2F^*.\\
	G=A-k_2A^*, \quad \left.\nabla G\cdot H\right|_{A=k_2A^*}=\phi\left(F-(\gamma+\mu_{{A,1}}+\mu_{A,2}A^*)A^*\right)\\
	\hspace{6cm}=\phi\left(F-k_2F^*\right)+\mu_{A,2}k_2A^*(1-k_2)\\
	\hspace{6cm}\leq0 \quad\mbox{in}\quad \Gamma_{\rr>1}, ~ \mbox{so}~ A\leq k_2A^*.\\
	\end{array}
	$$
\end{enumerate}
This ends the proof of the Lemma.

\end{document}